\numberwithin{equation}{section}
\theoremstyle{plain}
\newtheorem{exam}{Example}[section]
\newtheorem{theorem}[exam]{Theorem}
\newtheorem{remark}[exam]{Remark}
\newtheorem{proposition}[exam]{Proposition}
\newtheorem{definition}[exam]{Definition}
\newtheorem{corollary}[exam]{Corollary}
 \def\eps{{\epsilon}}
 \def\ov{\overline}
 \def\S{{\mathbb S}}
 \def\R{{\mathbb R}}
 \def\N{{\mathbb N}}
 \def\Q{{\mathbb Q}}
\title{Finite cyclicity of some graphics through a nilpotent point of saddle type inside quadratic systems\footnote{This research was supported by NSERC of Canada}}
\author{Christiane Rousseau$^{a}$, Chunhua Shan$^{b}$   and Huaiping Zhu$^{c}$ \\  \\
 $^{a}$ Department of Mathematics and Statistics and CRM, \\ University of Montreal, Montreal,
Canada H3C 3J7\\
$^{b}$Department of Mathematical and Statistical Sciences, \\ University of Alberta, Edmonton,
Canada T6G 2G1\\
$^{c}$ Department of Mathematics and Statistics and LAMPS, \\  York University, Toronto, Canada, M3J 1P3
}
\date{}
\begin{document}
\maketitle

\noindent {\bf Abstract.} In this paper we show the finite cyclicity of the two graphics $(I_{12}^1)$ and $(I_{13}^1)$ through a triple nilpotent point of saddle type inside quadratic vector fields. These results contribute to the program launched in 1994 by Dumortier, Roussarie and Rousseau (DRR program) to show the existence of  a uniform upper bound for the number of limit cycles for planar quadratic vector fields.

\noindent {\bf Key words.} Nilpotent saddle; Graphics; Cyclicity; DDR program; Poincar\'e first return map; Finiteness part of Hilbert's 16th problem.

\vskip .3 cm

\section{Introduction}

Hilbert's 16th problem, second part, asks for the maximum number of limit cycles, called $H(n)$, as well as the relative positions of limit cycles of a polynomial vector field $P(x,y)\frac{\partial}{\partial x} + Q(x,y) \frac{\partial}{\partial y}$ as a function of $n=\max(\mathrm{deg}(P),
\mathrm{deg}(Q))$. It is still unknown whether $H(n)$ is finite.
The DRR program started in 1994 by Dumortier, Roussarie and Rousseau (\cite{DRR94(1)}) produces a procedure to prove that
$H(2)<\infty$.  The underlying idea is a compactness argument. Indeed, polynomial vector fields can be extended to the Poincar\'e sphere $\S^2$ by adding points at infinity in all directions. The number of limit cycles of a vector field depends only  on its equivalence class under affine transformations and time rescalings. Also, limit cycles in quadratic vector fields necessarily surround a unique singular point with nondegenerate linear part, and linear vector fields can have no limit cycles. Hence, it is possible to compactify the  space of equivalence classes of quadratic vector fields with a nondegenerate singular point of anti-saddle type: this yields a compact parameter space $K$.  Limit cycles in the compact set $\S^2\times K$ accumulate on \emph{graphics}, which are unions of trajectories and singular points for a given value of the parameters. The DRR program reduces the proof that $H(2)<\infty$ to the proof that each graphic $\Gamma\subset \S^2$ surrounding a nondegenerate singular point of anti-saddle type and occurring for a parameter value $A_0\in K$ has finite cyclicity in $\S^2\times K$, i.e. can produce only a finite number of limit cycles in a neighborhood $U$ of $\Gamma$ for parameter values $A$ in a neighborhood $V$ of $A_0$. Achieving the DRR program requires proving the finite cyclicity of 121 graphics in $\S^2\times K$.  This program has stimulated the development of  highly sophisticated methods to treat problems of increasing complexity. The graphics can be grouped in large classes and the strategy is to treat one class at a time. In this paper, we prove that  the two graphics through a nilpotent point of saddle type, $(I_{12}^1)$ and $(I_{13}^1)$, that do not surround a center, have finite cyclicity.  Therefore the results from this paper will bring the number of graphics of the program for which finite cyclicity is proved to 88.

In practice, in this paper we address the following questions:
\begin{itemize}
\item[(1)] We first show that a generic graphic through a nilpotent saddle of multiplicity 3 has finite multiplicity in the case where one connection is fixed. The case of codimension 3 was already treated in \cite{ZR} and it suffices to treat the case $a=-\frac12$ corresponding to $b=0$ in the DRS normal form (\cite{DRS}).
\item[(2)] In quadratic systems, we show that the genericity condition is met for $(I_{12}^1)$. This amounts to show that the integral of the divergence along the invariant parabola is nonzero. Note that the same computation shows the finite cyclicity of $(I_{9b}^2)$ when the codimension of the point is $3$ (corresponding to $\eps_2\neq0$ in \cite{DRS}).
\item[(3)] We show that a generic graphic through a nilpotent saddle of multiplicity 3 and a saddle-node with central transition has finite multiplicity in the case where one connection is fixed. As an application, this yields the finite cyclicity of the graphic $(I_{13}^1)$ inside quadratic systems.
 \end{itemize}

\begin{figure}
\begin{center}
\subfigure[$(I_{12}^1)$]
    {\includegraphics[width=3.5cm]{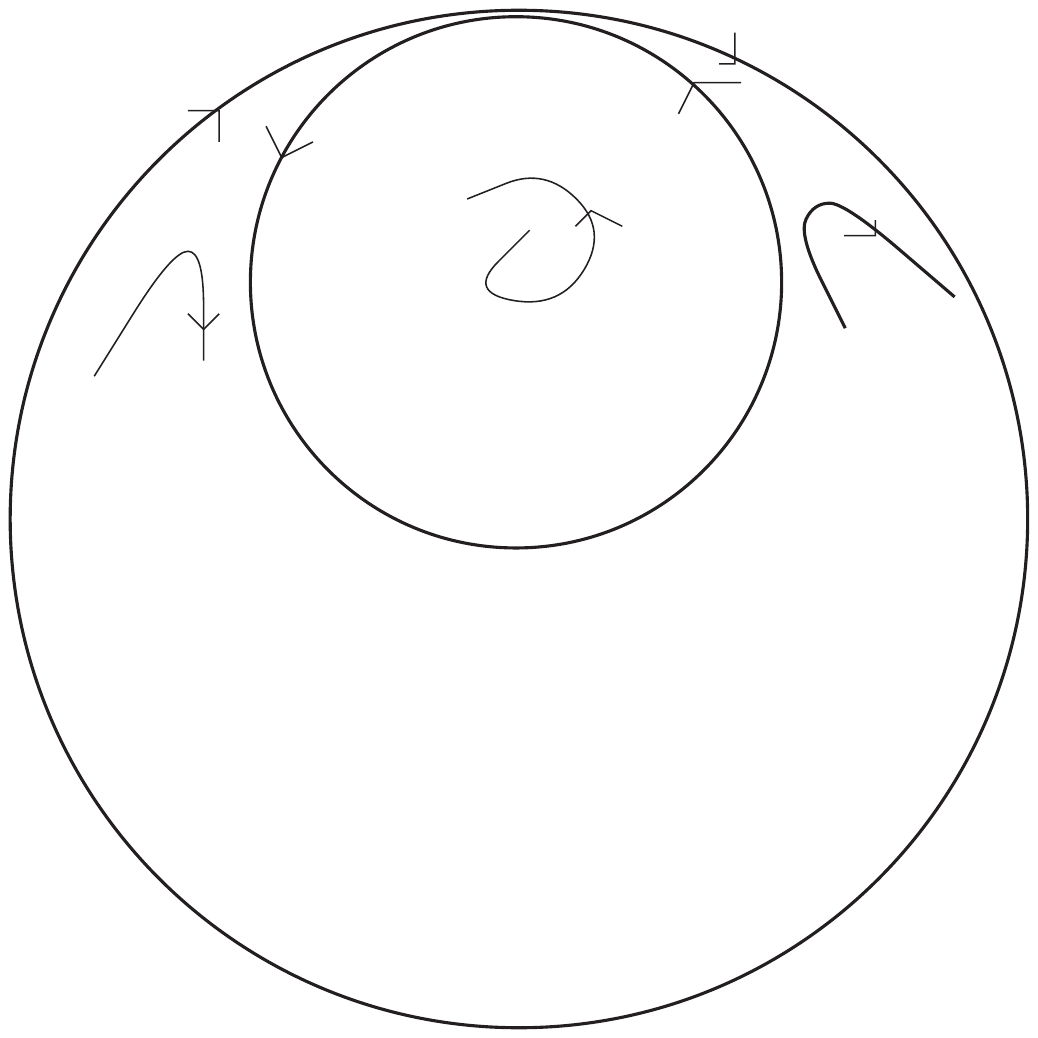}}\qquad\qquad
\subfigure[$(I_{13}^1)$]
    {\includegraphics[width=3.5cm]{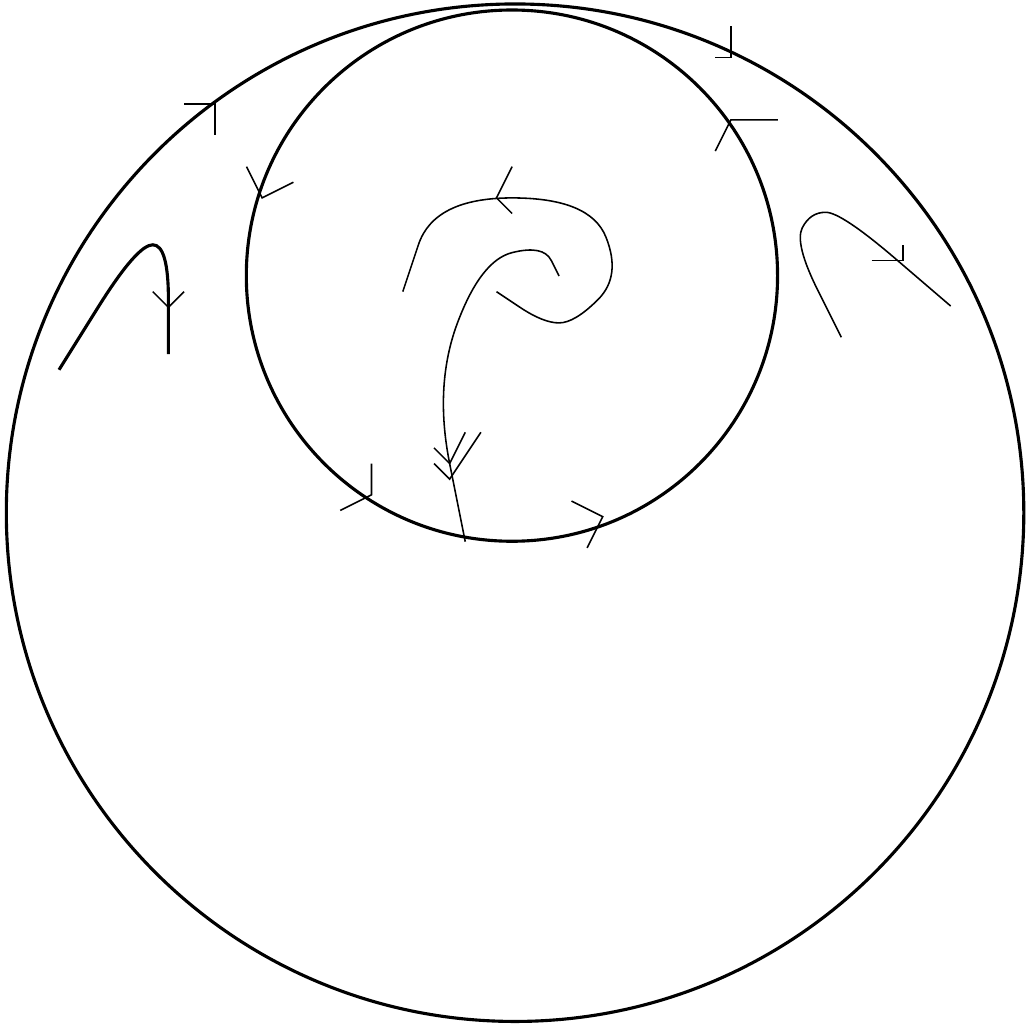}}
    \caption{Graphics for which we prove finite cyclicity}
\label{fig.list}\end{center}\end{figure}

\section{Preliminaries}

\subsection{Normal form for the unfolding of a nilpotent triple point of saddle type} We consider graphics through one singular point, which is a triple nilpotent point of saddle type. A germ of vector field in the neighborhood of such a point has the form
\begin{align}\begin{split}
\dot x&=y \\
\dot y&=x^3+ bxy +\eta x^2y + yO(x^3)+ O(y^2).\end{split}\label{normal_form_DRS}\end{align}

The unfolding of such points has been studied by Dumortier, Roussarie and Sotomayor, \cite{DRS}, including a normal form for the unfolding of the family. A different normal form has been used in \cite{ZR} for studying the finite cyclicity of generic graphics through such singular points, which is particularly suitable for applications in quadratic vector fields, where there is always an invariant line through a nilpotent point of multiplicity $3$.

\bigskip
Indeed, a germ of $C^\infty$ vector field in the neighborhood of a  nilpotent point of multiplicity $3$ of saddle type can be brought by an analytic change of coordinates to the form
\begin{align}\begin{split}
\dot x&=y+ ax^2, \\
\dot y&=y(x + \eta x^2 +o(x^2) + O(y)),\end{split}\label{normal_form_ZR}\end{align}
with $a<0$ (see Figure~\ref{fig.topotype}).
\begin{figure}[ht]\begin{center}
   \includegraphics[angle=0, width=5cm]{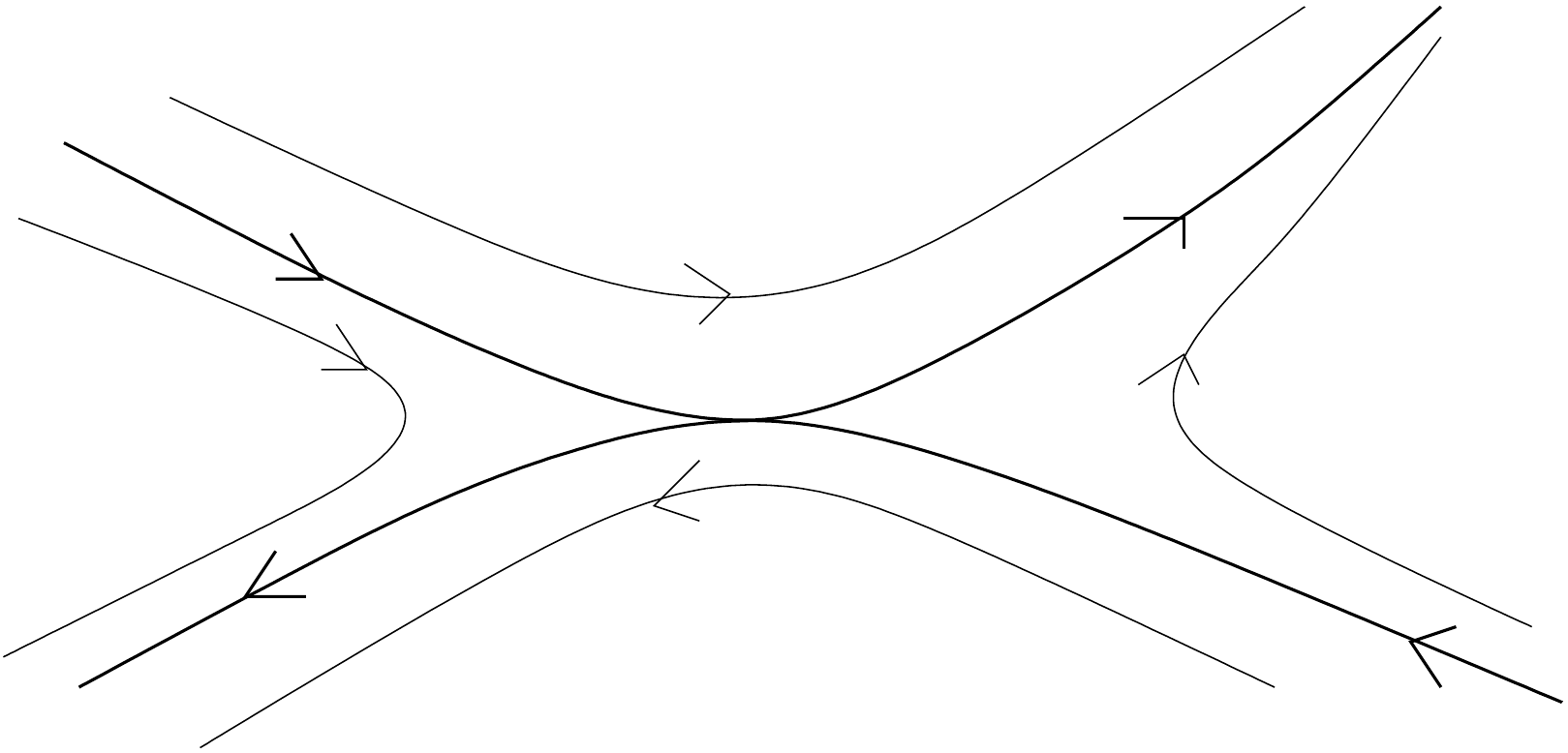}
\caption{A nilpotent saddle}
\label{fig.topotype}\end{center}\end{figure}

\bigskip

A generic unfolding depending on a multi-parameter $\lambda= (\mu_1,\mu_2,\mu_3, \mu)$ in a neighborhood of the origin has the form
\begin{align}\begin{split}
\dot x&=y+ a(\lambda)x^2 +\mu_2, \\
\dot y&=\mu_1+\mu_3 y + x^4h_1(x,\lambda)+y(x + \eta x^2 +x^3 h_2(x,\lambda))+ y^2Q(x,y,\lambda),\end{split}\label{normal_form_family}\end{align}
where $h_1(x,\lambda)= O(|\lambda|)$. Moreover, $h_1, h_2, Q$ are $C^\infty$ functions, and $Q$ can be chosen of arbitrarily high order in $\lambda$.

\subsection{Finite cyclicity of a graphic}
\begin{definition} A \emph{graphic} $\Gamma$ of a vector field $X_0$ , i.e. a union of trajectories and singular points, has \emph{finite cyclicity} inside a family $X_\lambda$ if there exists $N\in\mathbb N$, $\eps>0$ and $\delta>0$ such that any vector field $X_\lambda$ with $|\lambda|<\delta$ has at most $N$ periodic solutions  at a Hausdorff distance less than $\eps$ from  $\Gamma$. The minimum value $N$ is the \emph{cyclicity} of the graphic. \end{definition}

When studying the finite cyclicity of a graphic $\Gamma$, we need to find a uniform bound for the number of periodic solutions that can appear from it, for \emph{all} values of the multi-parameter in a small neighborhood $W$ of the origin. Typically, we need to find a uniform bound for the number of fixed points of the Poincar\'e return map or, equivalently, for the number of zeros of some displacement map between two transversal sections to the graphic. With graphics containing a nilpotent singular point there is no way to make a uniform treatment for all $\lambda\in W$, and we cover $W$ by an infinite number of sectors with conic structure, one around each direction in parameter space. On  each sector,  we give a uniform bound for the finite cyclicity. Since the set of directions in parameter space is compact, we extract a finite subcovering: the maximum of the cyclicities on each sector of the covering is the cyclicity of the graphic $\Gamma$. The method for doing this is the \emph{blow-up of the family}, which was first introduced by Roussarie.

\subsection{Blow-up of the family}
Let us make the change of parameters
\begin{equation}
(\mu_1,\mu_2,\mu_3)= (\nu^3\overline{\mu}_1,\nu^2\overline{\mu}_2,\nu\overline{\mu}_3).\label{change_parameters}\end{equation}
We take a neighborhood of the origin in parameter-space of the form $\S^2\times [0,\nu_0)\times U$, where $U$ is a neighborhood of $0$ in $\mu$-space, $\ov{M}=(\overline{\mu}_1,\overline{\mu}_2,\overline{\mu}_3)\in\S^2$ and $\nu\in [0,\nu_0)$.

\bigskip

Note that $\S^2$ is compact. Hence, to give an argument of finite cyclicity for the graphic $\Gamma$, it suffices to find  a neighborhood of each $\ov{M}=(\overline{\mu}_1,\overline{\mu}_2,\overline{\mu}_3)\in\S^2$ inside $\S^2$, a corresponding $\nu_0>0$ and a corresponding $U$ on which we can give a bound for the number of limit cycles. In our study, we will consider special values $a_0$ of $a$. It is important to note that $a(\lambda)$ depends on $\lambda$, and hence that $a-a_0$ is  a parameter in itself.

\bigskip

The way to handle this program is to do a \emph{blow-up of the family}, a technique developed by Roussarie.  For this, we introduce the weighted blow-up of the singular point $(0,0,0)$ of the three-dimensional family of vector fields obtained by adding the equation $\dot \nu=0$ to the 2-dimensional system \eqref{normal_form_family}. The blow-up transformation is given by
\begin{equation}
(x,y,\nu) = (r\ov{x}, r^2\ov{y}, r\rho),\label{blow-up_family}\end{equation}
with $r>0$ and $(\ov{x},\ov{y},\rho)\in \S^2$.
After dividing by $r$  the transformed vector field, we get a family of $C^\infty$ vector fields $\ov{X}_{A}$, depending on the parameters $A=(a- a_0,\ov{M}, \mu)$. The foliation $\{\nu=r\rho=Const\}$ is invariant under the flow. The leaves $\{r\rho=\nu\}$ with $\nu>0$ are regular two-dimensional manifolds, while the critical locus $\{r\rho=0\}$ is stratified and contains the two strata (see Figure~\ref{fig.strat}):
\begin{itemize}
\item $\S^1\times \R^+$ is the blow-up of $X_0$ (for $\lambda=0$);
\item $D_{\ov{\mu}}= \{\ov{x}^2+\ov{y}^2+\rho^2=1\mid \rho \geq0\}$.
\end{itemize}

\subsection{Limit periodic sets in the blow-up family}
The strategy for studying the finite cyclicity of $\Gamma$ is the following. We study the singular points of $\ov{X}$ on $r=\rho=0$. For $a\neq \frac12$, there will be four distinct singular points (occuring in two pairs) corresponding to $\ov{y}=0$ (for $P_1$ and $P_2$) and $\ov{y}=\frac{1-2a}{2}$ (for $P_3$ and $P_4$): see Figure~\ref{fig.strat}. Their eigenvalues appear in Table~\ref{eigenvalue}.
\begin{figure}[ht]\begin{center}
\includegraphics[angle=0,width=5.5cm]{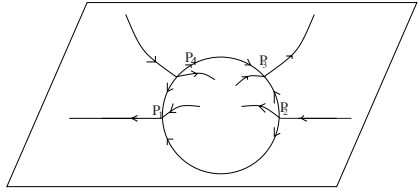}
\caption{The stratified set $\{r\rho=0\}$ in the blow-up.}
\label{fig.strat}\end{center}\end{figure}
\begin{table}[ht]\begin{center}\begin{tabular}{|c|c|c|c|}
\hline
      &  $r$       &  $\rho$      &    $ y $          \\ \hline
$P_1$ & $   -a   $ &  $\ \ a  $   &    $-(1-2a)$      \\ \hline
$P_2$ & $\ \ a   $ &  $   -a  $   &    $\ \ (1-2a)$   \\ \hline
$P_3$ & $\ \ 1/2 $ &  $ -1/2  $   &    $-(1-2a)$      \\ \hline
$P_4$ & $    -1/2$ &  $\ \ 1/2$   &    $\ \ (1-2a)$   \\ \hline
\end{tabular}\label{eigenvalue}
\caption{The eigenvalues at $P_i$ ($i=1,2,3,4$)}
\end{center}\end{table}

\bigskip

In this paper we study the finite cyclicity of a  graphic $\Gamma$ joining $P_3$ and $P_4$.
We consider a particular value $A_0=(a_0, \ov{M}_0)$. Here is the strategy for finding an upper bound for the number of limit cycles that appear for $A$ in a neighborhood of $A_0$. We determine the phase portrait of the family rescaling \eqref{family_rescaling} on $D_{\ov{\mu}}$: this allows determining \emph{limit periodic sets} $\ov{\Gamma}$, which are formed by the union of $\Gamma$ with a finite number of trajectories and singular points on $D_{\ov{\mu}}$ joining $P_4$ and $P_3$, so that their orientation will be compatible with that of $\Gamma$. The limit periodic sets to be studied appear in Table~\ref{tab.shhconvex}. They are continuous families of limit periodic sets. We use the convention to label the different types: Sxhhia, Sxhhib, etc, starting from the top. For instance, Sxhh1a corresponds to the boundary upper limit periodic set, Sxhh1b corresponds to any of the intermediate limit periodic set, and Sxhh1c corresponds to the lower periodic set through the saddle point. They come from studying the phase portrait of the \emph{family rescaling}
\begin{align} \begin{split}
\dot{\ov{x}}&= \ov{y}+a\ov{x}^2+\ov{\mu}_2,\\
\dot{\ov{y}}&=\ov{\mu}_1+\ov{\mu}_3\ov{y} +\ov{x}\ov{y},\end{split}\label{family_rescaling}
\end{align}
obtained by putting $\rho=1$ and $r=0$. It then suffices to show that each limit periodic set has finite cyclicity, i.e. to show the existence of an upper bound for the number of periodic solutions of $\ov{X}_A$ for $A$ in a small neighborhood of $A_0$.
\begin{table}[ht]
\begin{center}\begin{tabular}{|c|c|c|}
\hline
   \includegraphics[angle=0, width=3.2cm]{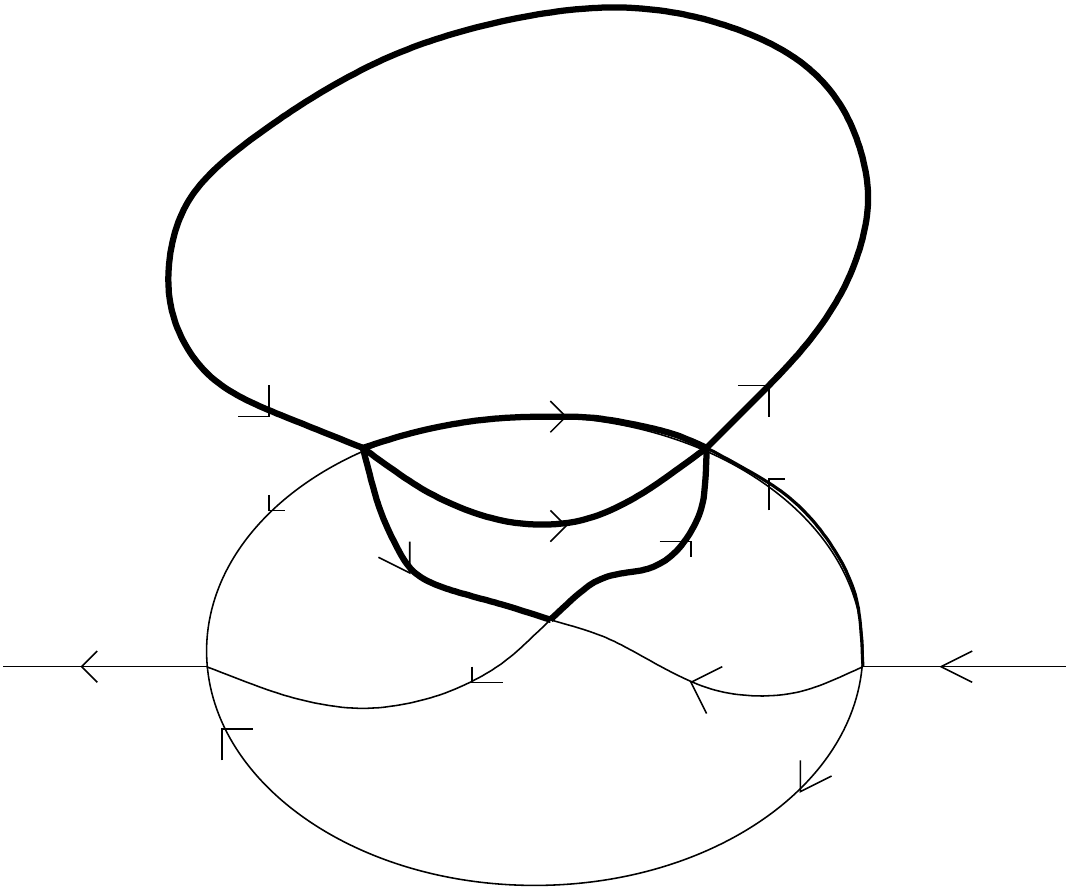}
&  \includegraphics[angle=0,width=3.2cm]{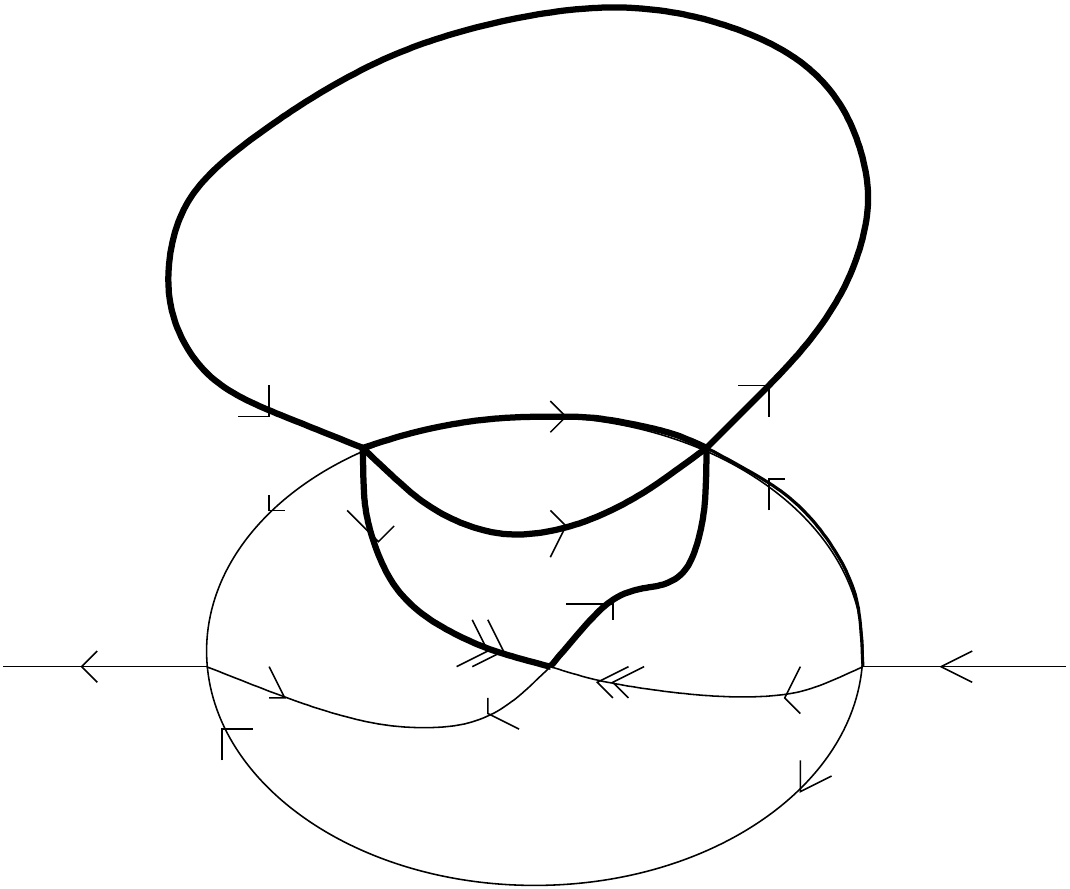}
&  \includegraphics[angle=0,width=3.2cm]{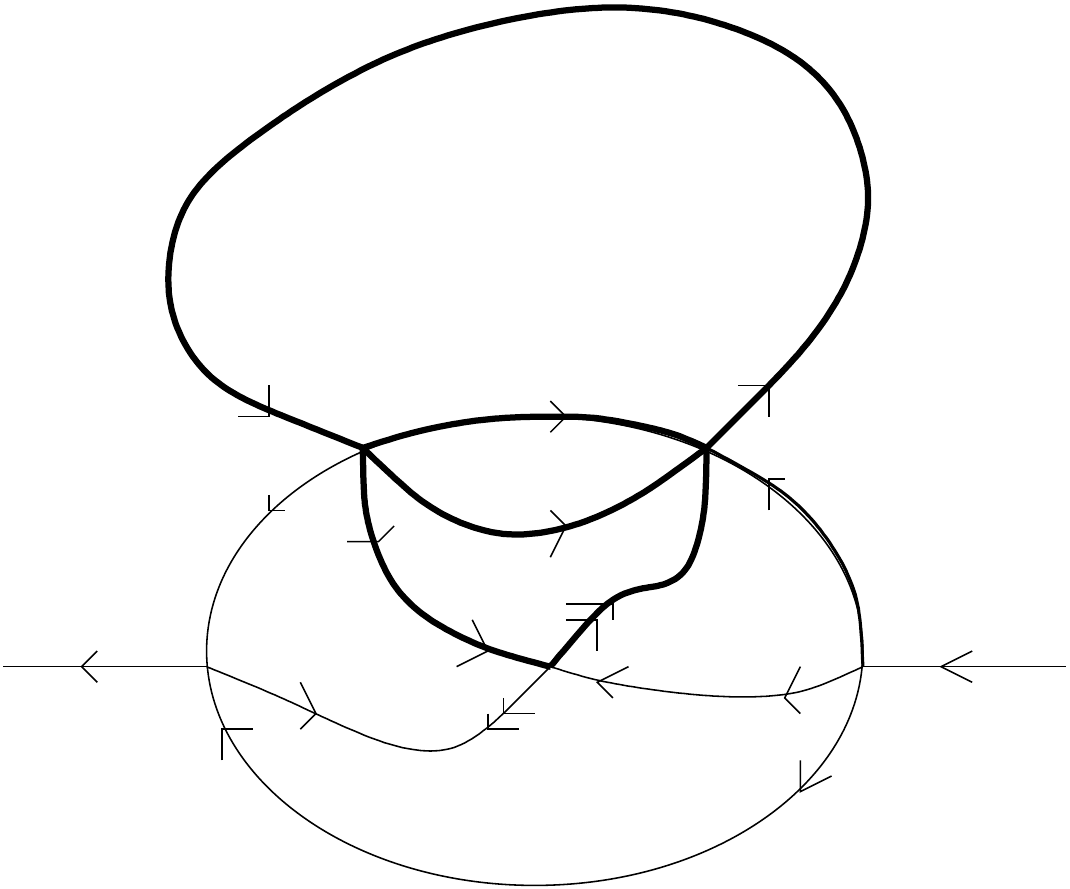}
\\ \hline   Sxhh1  &  Sxhh2 &  Sxhh3
\\ \hline
   \includegraphics[angle=0,width=3.2cm]{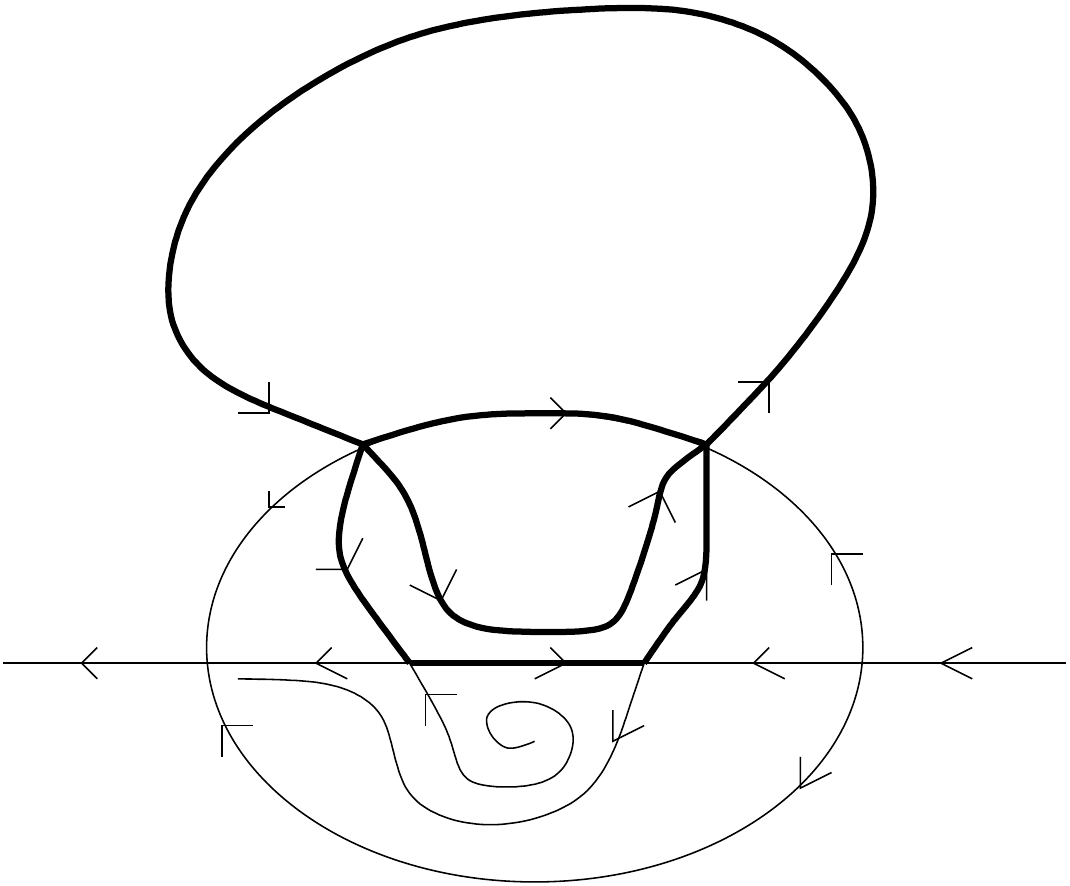}
&  \includegraphics[angle=0,width=3.2cm]{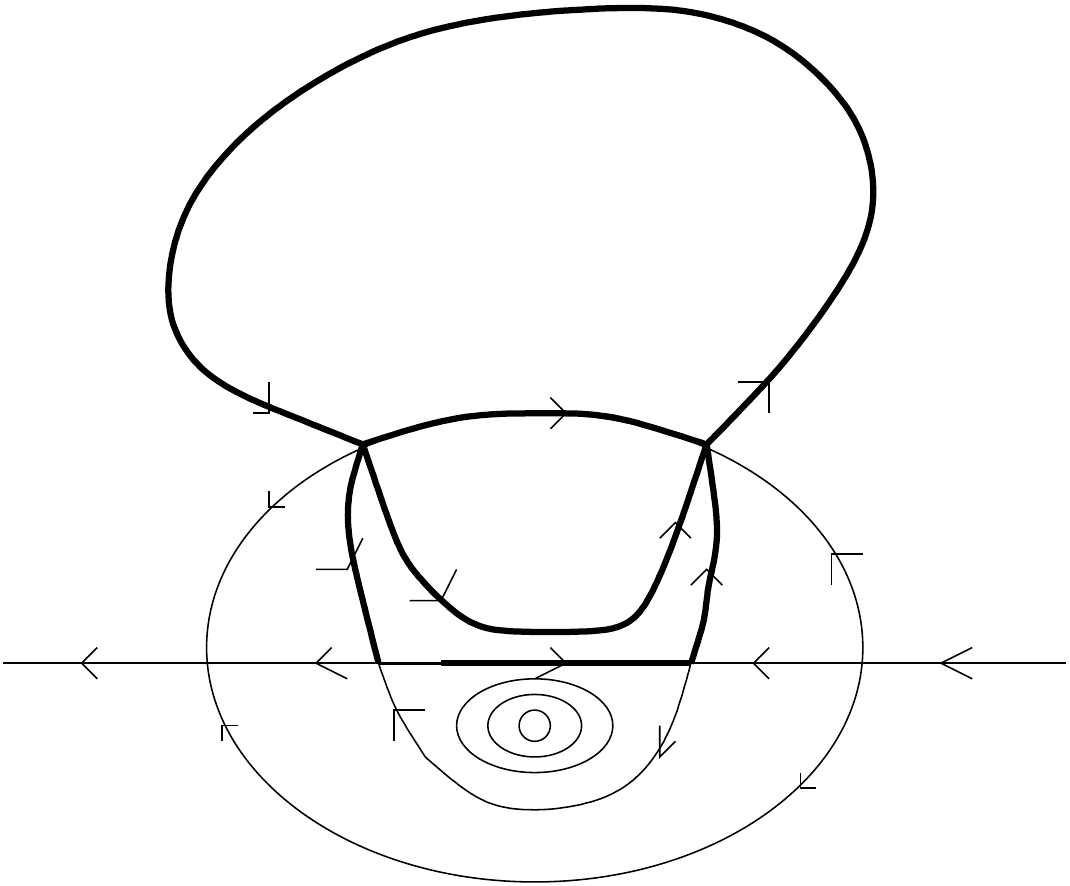}
&  \includegraphics[angle=0,width=3.2cm]{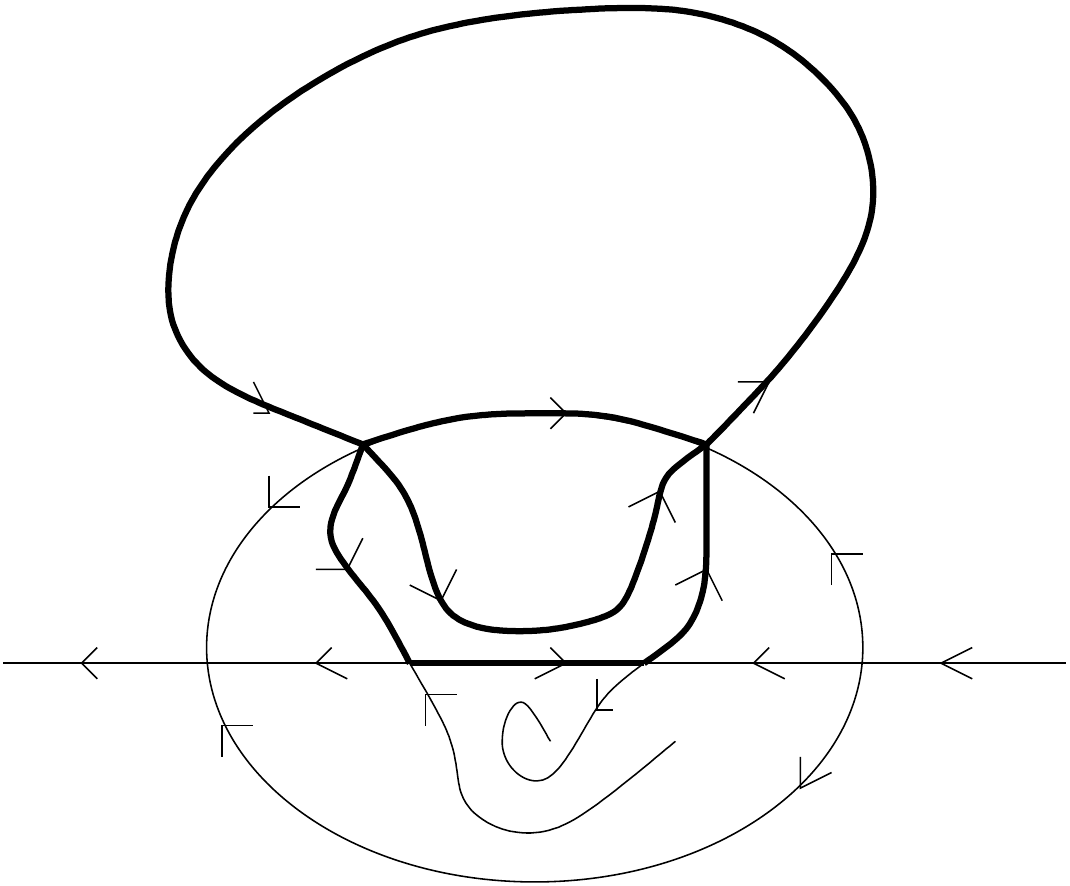}
\\ \hline  Sxhh4  &  Sxhh5  &  Sxhh6
\\ \hline
   \includegraphics[angle=0,width=3.2cm]{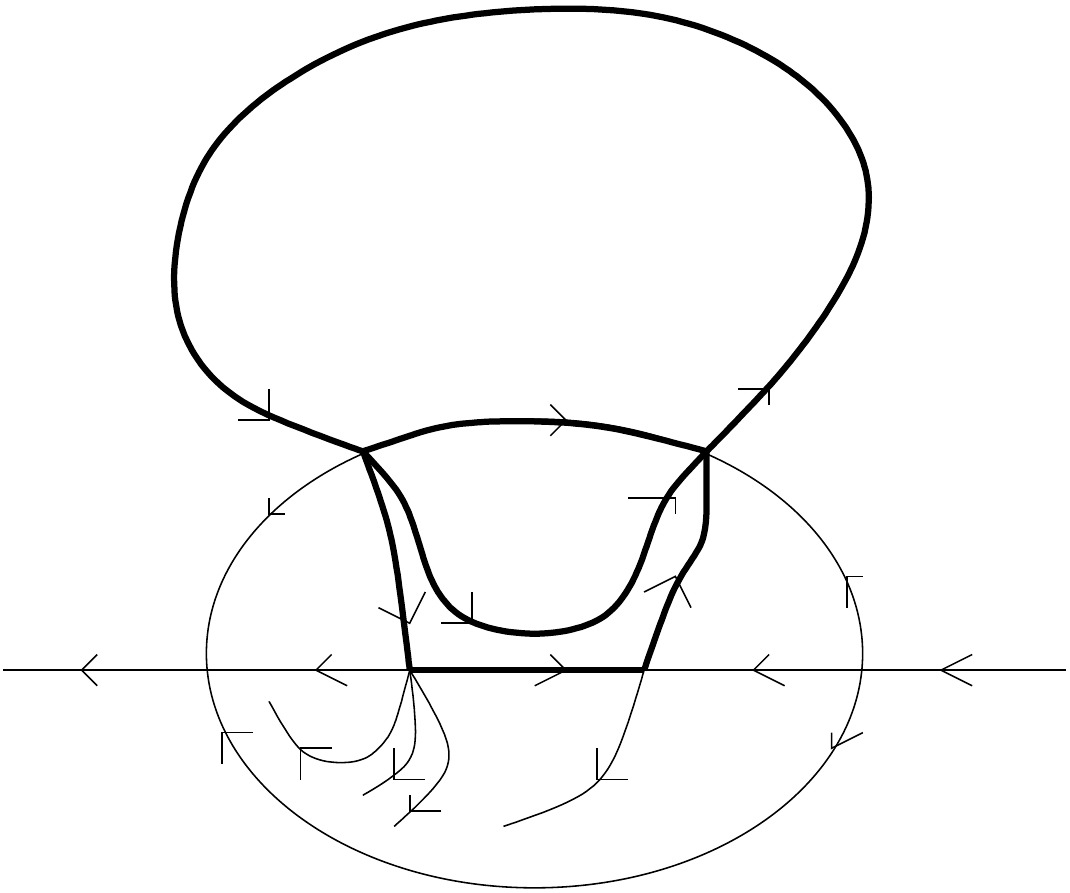}
&  &\includegraphics[angle=0,width=3.2cm]{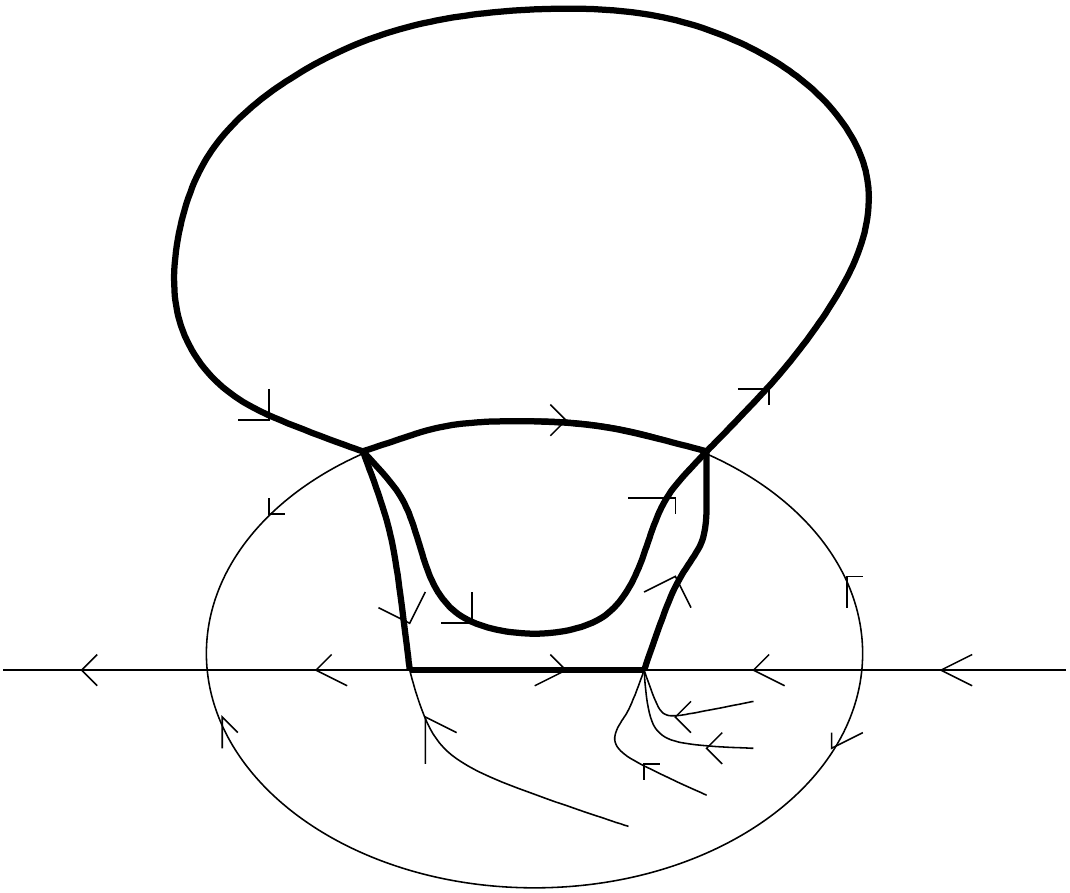}
\\  \hline  Sxhh7  &              &  Sxhh8
\\  \hline
   \includegraphics[angle=0,width=3.2cm]{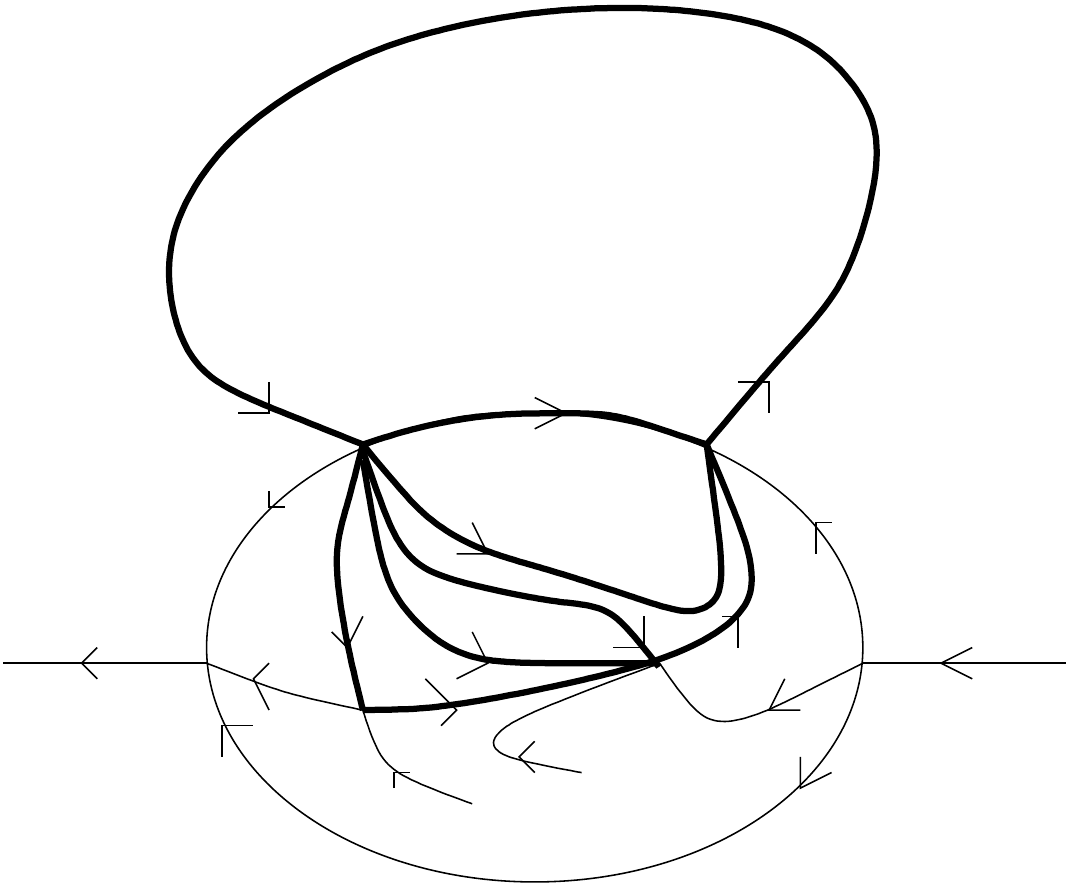}
&  &\includegraphics[angle=0,width=3.2cm]{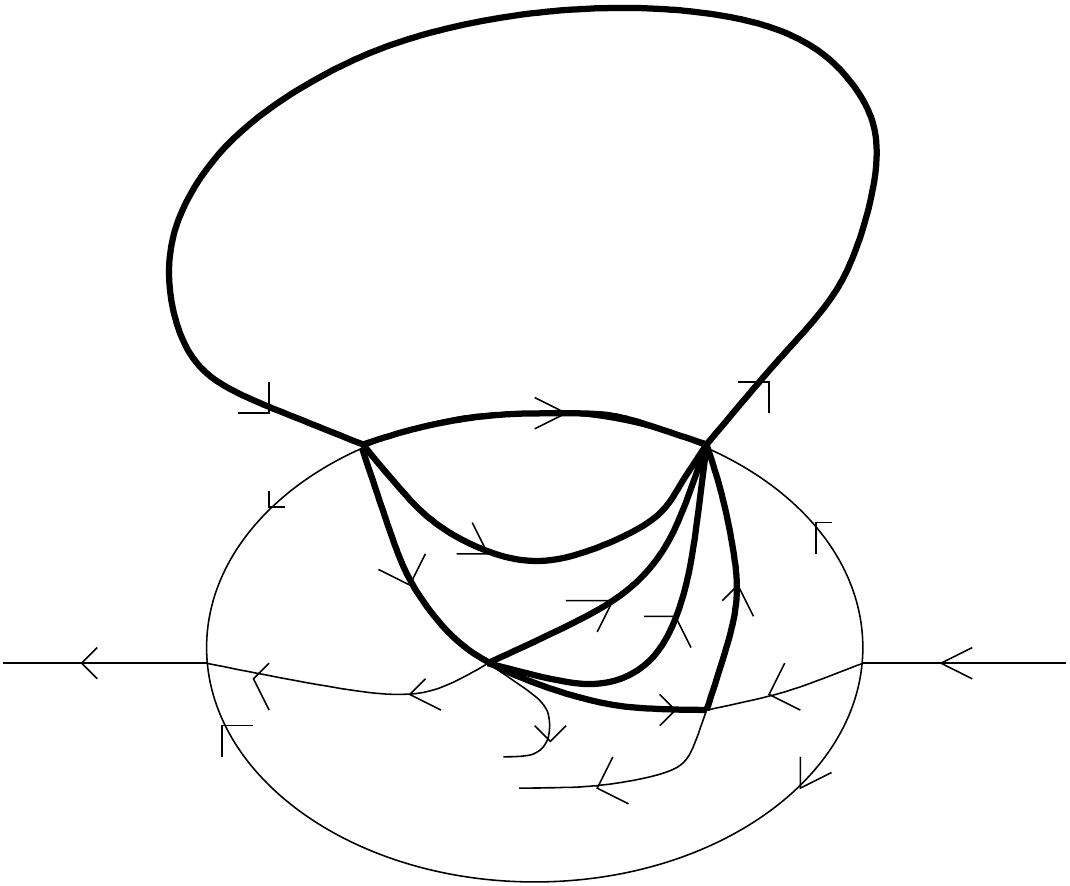}
\\   \hline  Sxhh9  &              &  Sxhh10
\\  \hline
\end{tabular}\end{center}
\caption{Convex limit periodic sets of hh-type for a graphic with
a nilpotent saddle}
\label{tab.shhconvex}\end{table}

\subsection{Proving the finite cyclicity of a limit periodic set }
The following argument will be used for proving the finite cyclicity of a limit periodic set: limit cycles correspond to fixed points of a  Poincar\'e return map defined on a section or, equivalently, to zeroes of a displacement map between two sections. The sections are 2-dimensional but, because of the invariant foliation, the problem can be reduced to a 1-dimensional problem and the conclusion follows by a derivation-division argument.
\bigskip

To compute the displacement map, we decompose the related transition maps between sections into compositions of Dulac maps in the neighborhood of the singular points and regular $C^k$ transitions elsewhere.

\subsection{Dulac maps} The Dulac maps have been computed in \cite{ZR}. There are two types of Dulac transitions. The first type of transition map goes from a section $\{r= r_0\}$ to a section $\{\rho = \rho_0\}$, or the other way around. This type of transition typically behaves as an affine map which is a very strong contraction or dilatation. The study of the number of  zeroes of a displacement map involving only Dulac maps of the first type is reduced to the study of the number of zeroes of a 1-dimensional map.

The second type of Dulac map is concerned with a transition from a section $\{\ov{y}=\ov{y}_0\}$ to, either a section $\{r=r_0\}$, or a section $\{\rho=\rho_0\}$. Here we only need the first type of Dulac map. We recall the precise results here.

\subsubsection{First type of Dulac map}\label{first_type_Dulac} We consider a Dulac map $D_i$ from a section $\Pi_i= \{\rho=\rho_0\}$ to a section $\Sigma_i=\{r=r_0\}$ in the neighborhood of a singular point $P_i$ (potentially following the flow backwards).
We decide to choose $(\nu,\tilde{y}_i)$ as coordinates on the sections $\Pi_i$ and $\Sigma_i$, where $\tilde{y}_i$ is a normalizing coordinate for the blow-up system in the neighborhood of $P_i$. The normal form near $P_i$ is given by
\begin{align}\begin{split}
\dot r&=r,\\
\dot \rho&=-\rho,\\
\dot{\tilde{y}}_i&= G(r, \rho, \tilde{y}_i),\end{split}\end{align}
where
\begin{equation}G(r, \rho, \tilde{y}_i)=\begin{cases} \tilde{y}_i (-\sigma +\varphi_i(\nu)), &\sigma_0\notin\Q,\\
 \tilde{y}_i (-\sigma +\varphi_i(\nu)+ f_i(r^p\tilde{y}_i)) + \eta_i(\nu)\rho^p,&\sigma_0=p\in \N,\\
  \tilde{y}_i (-\sigma +\varphi_i(\nu)+ f_i(r^p\tilde{y}_i^q)),&\sigma_0=\frac{p}{q},\: q>1\end{cases}\label{Normal_Form}\end{equation}
where
$$\sigma= \begin{cases}2(1-2a)= 2(1-2a_0)+\alpha, &i=3,4,\\
\frac{2a-1}{a}= \frac{2a_0-1}{a_0}+ \alpha,&i=1,2.\end{cases}$$

\begin{definition} The compensator $\omega$ is a univeral unfolding of the function $-\log x$, namely
 \begin{equation}\omega(x,\alpha)= \begin{cases} \frac{x^{-\alpha}-1}{\alpha},&\alpha\neq0,\\
-\log x, &\alpha=0.\end{cases}\label{def:omega}\end{equation}\end{definition}

The form of the Dulac map was first studied in \cite{ZR}. The following form is a refinement from  \cite{RR2}.

\begin{theorem}\label{thm_Dulac_type_1}
 We consider the Dulac map from the section $\{\rho= \rho_0\}$ to the section $\{r=r_0\}$, both parametrized by $(\tilde{y}_i,\nu).$
 Let $\nu_0=r_0\rho_0$ and \begin{equation}\bar\sigma_i=\sigma-\varphi_i(\nu)=\sigma_0+\alpha_i.\label{def_alpha}\end{equation} The $\tilde{y}_i$-component of the  transition map $D_i$ has the following expression:
\begin{enumerate}
\item If $\sigma_0\not\in \Q: $
\begin{equation}\label{eq20d}
D_i(\tilde{y}_i,\nu)=\Big(\frac{\nu}{\nu_0}\Big)^{\bar\sigma} \tilde{y}_i.
\end{equation}
\item    If $\sigma_0=\frac{p}{q}\in \Q$ with $(p,q)=1$:

\begin{equation}\label{eq21d}
D_i(\tilde{y}_i,\nu)=\eta_i(\nu)\rho_0^p\Big(\frac{\nu}{\nu_0}\Big)^{\bar\sigma}\omega\Big(\frac{\nu}{\nu_0},\alpha_i\Big)+\Big(\frac{\nu}{\nu_0}\Big)^{\bar\sigma}\Big(\tilde{y}_i+\phi_i(\tilde{y}_i,\nu,)\Big),
\end{equation}
where
\begin{itemize}
\item $\phi_i=O\left(\nu^{p+q\alpha_i}\omega^{q+1}\left(\frac{\nu}{\nu_0},\alpha_i\right)|\ln \nu|\right)$ and for any integer $l\geq 2,$   $\phi_{\mu,\sigma}$ is of class ${\cal C}^{l-2}$   in $\left(\tilde{y}_i,\nu^{1/l},\nu^{1/l}\omega\left(\frac{\nu}{\nu_0},\alpha_i\right), \nu,\mu,\sigma\right)$;
\item $\eta_i$  is as  in \eqref{Normal_Form}. In particular, $\eta_i\equiv 0$ when $\sigma_0\not\in \N.$
\end{itemize}
\end{enumerate}
\end{theorem}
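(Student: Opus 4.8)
The plan is to integrate the normal-form flow \eqref{Normal_Form} along the invariant leaves $\{r\rho=\nu\}$ and read off the transition from $\Pi_i=\{\rho=\rho_0\}$ to $\Sigma_i=\{r=r_0\}$. Since $\dot r=r$ and $\dot\rho=-\rho$, a trajectory issuing from $(r,\rho)=(r_{\text{in}},\rho_0)$ is $r(t)=r_{\text{in}}e^{t}$, $\rho(t)=\rho_0 e^{-t}$, and the product $\nu=r\rho$ is preserved. Hitting $\{r=r_0\}$ takes time $T=\log(r_0/r_{\text{in}})$, and since $\nu/\nu_0=r_{\text{in}}\rho_0/(r_0\rho_0)=r_{\text{in}}/r_0$ we obtain the basic identity $T=-\log(\nu/\nu_0)$ together with $r_{\text{in}}=r_0(\nu/\nu_0)$. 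Along each leaf both $\nu$ and $\alpha_i=\alpha-\varphi_i(\nu)$ are constant, so by \eqref{def_alpha} the coefficient $\bar\sigma=\sigma_0+\alpha_i$ is leaf-constant and the $\tilde y_i$-equation becomes a scalar equation with frozen coefficients, which I can integrate explicitly.

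Case 1 ($\sigma_0\notin\Q$) is then immediate: the equation reads $\dot{\tilde y}_i=-\bar\sigma\,\tilde y_i$, so $\tilde y_i(T)=\tilde y_i(0)e^{-\bar\sigma T}=(\nu/\nu_0)^{\bar\sigma}\tilde y_i(0)$, which is \eqref{eq20d}. For Case 2 the key step is to pass to the resonant monomial $u=r^p\tilde y_i^{\,q}$, which is exactly the combination left invariant by the linear part. A direct computation using $\frac{d}{dt}r^{p}=p\,r^{p}$ and $r^p\rho^p=\nu^p$ turns \eqref{Normal_Form} into a scalar autonomous (along the leaf) equation: for $q>1$,
\[
\dot u = q\,u\bigl(-\alpha_i+f_i(u)\bigr),
\]
and for $q=1$ (where $u=r^p\tilde y_i$),
\[
\dot u = u\bigl(-\alpha_i+f_i(u)\bigr)+\eta_i(\nu)\,\nu^{p}.
\]
Returning to $\tilde y_i=(u/r_0^{p})^{1/q}$ converts solutions of these equations into the $\tilde y_i$-component of $D_i$.

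Integrating the linear part $\dot u=-q\alpha_i u$ (resp. $-\alpha_i u$) over time $T$ and pulling back through $\tilde y_i=(u/r_0^p)^{1/q}$, using $r_{\text{in}}=r_0(\nu/\nu_0)$ and $e^{-\alpha_i T}=(\nu/\nu_0)^{\alpha_i}$, reproduces the leading factor $(\nu/\nu_0)^{\sigma_0+\alpha_i}=(\nu/\nu_0)^{\bar\sigma}$. The inhomogeneous term $\eta_i\nu^p$ present when $\sigma_0=p\in\N$ is handled by variation of constants, contributing $\eta_i\nu^p\,\frac{1-(\nu/\nu_0)^{\alpha_i}}{\alpha_i}$; rewriting $\frac{1-(\nu/\nu_0)^{\alpha_i}}{\alpha_i}=(\nu/\nu_0)^{\alpha_i}\,\omega\!\bigl(\frac{\nu}{\nu_0},\alpha_i\bigr)$ by \eqref{def:omega}, and using $\nu^p/r_0^p=(\nu/\nu_0)^p\rho_0^p$, this becomes precisely $\eta_i(\nu)\rho_0^{p}(\nu/\nu_0)^{\bar\sigma}\,\omega(\frac{\nu}{\nu_0},\alpha_i)$, the compensator term of \eqref{eq21d}. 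Thus the two explicit terms of \eqref{eq21d} arise from the linear and the resonant-source parts of the $u$-equation, and this is also where the nonremovability of $\eta_i$ exactly when $\sigma_0\in\N$ is reflected.

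The remaining contribution of the nonlinearity $f_i$ is collected into $\phi_i$, and this is where the real work lies. I would write $u=u_0+v$ with $u_0$ the solution of the linear/inhomogeneous part above, and set up a Gronwall or fixed-point estimate for $v$ on the interval of length $T=-\log(\nu/\nu_0)$; since $f_i(u)=O(u)$, each integration against the flow gains one power of the compensator and a $|\ln\nu|$ loss, which yields the bound $\phi_i=O\bigl(\nu^{p+q\alpha_i}\omega^{q+1}(\frac{\nu}{\nu_0},\alpha_i)|\ln\nu|\bigr)$. The main obstacle is the regularity assertion: one must show that $\phi_i$ is of class $\mathcal C^{l-2}$ jointly in $\bigl(\tilde y_i,\nu^{1/l},\nu^{1/l}\omega(\frac{\nu}{\nu_0},\alpha_i),\nu,\mu,\sigma\bigr)$. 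This is delicate because the compensator $\omega$ degenerates at $\alpha_i=0$ and because $\nu^{1/l}$ and $\nu^{1/l}\omega$ must be treated as independent quasi-monomial variables rather than as functions of $\nu$ alone. I would obtain it by differentiating the integral representation of $v$ and verifying that every derivative reproduces expressions in the same admissible monomials, which is exactly the almost-regular framework already developed for this normal form in \cite{ZR} and refined in \cite{RR2}; invoking that machinery closes the argument and gives \eqref{eq20d} and \eqref{eq21d}.
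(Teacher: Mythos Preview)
The paper does not actually prove this theorem: it is stated with the remark that ``the form of the Dulac map was first studied in \cite{ZR}'' and that ``the following form is a refinement from \cite{RR2}'', and no argument is given beyond those citations. So there is no in-paper proof to compare against.

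That said, your sketch is a correct outline of how the result is obtained in those references: one integrates the normal form \eqref{Normal_Form} along the invariant leaves $r\rho=\nu$, the non-resonant case is a linear integration, and in the rational case the substitution $u=r^{p}\tilde y_i^{\,q}$ kills the leading linear part and reduces the problem to a scalar equation whose linear and inhomogeneous pieces produce, respectively, the factor $(\nu/\nu_0)^{\bar\sigma}$ and the compensator term $\eta_i\rho_0^{p}(\nu/\nu_0)^{\bar\sigma}\omega(\nu/\nu_0,\alpha_i)$. Your computations for these explicit pieces are correct. The only part you do not carry out---and correctly flag as the real work---is the regularity of the remainder $\phi_i$ in the quasi-monomial variables $(\tilde y_i,\nu^{1/l},\nu^{1/l}\omega,\nu,\mu,\sigma)$; you defer this to the almost-regular machinery of \cite{ZR} and \cite{RR2}, which is exactly what the paper itself does by citation.
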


\begin{remark}\label{remark} It follows from the form of $\phi$ as a function of class ${\cal C}^{l-2}$ on the generalized monomials $\tilde{y}_i$, $\nu^{1/l}$ and $\nu^{1/l}\omega\left(\frac{\nu}{\nu_0},\alpha_i\right)$ that all its derivatives with respect to $\tilde{y}_i$ of small order are $O(\nu^\beta)$ for some $\beta>0$. We say that $\phi$ has property $J$. \end{remark}

\subsection{Dulac map near a hyperbolic or semi-hyperbolic point}
When considering limit periodic sets, we will have additional singular points on them, and their associated Dulac maps. These can be explicitly calculated when the system is in $C^k$ normal form. We recall very briefly the form of these Dulac maps.

\begin{theorem}\label{Dulac_saddle} We consider a polynomial normal form for a family depending on a multi-parameter $A$, in the neighborhood of a hyperbolic saddle point with eigenvalues $\lambda_1(A)>0, -\lambda_2(A)<0$. The \emph{hyperbolicity ratio} is defined as the quotient $\tau=\frac{\lambda_2(A)}{\lambda_1(A)}$. If the system near the saddle has the following $C^k$ normal form for $A$ close to $A_0:$\begin{align}\begin{split}
\dot x&=\lambda_1(A)x,\\
\dot y&=-\lambda_2(A)y( 1+ Q(x,y)),\end{split}\end{align}
with $$Q(x,y)=\begin{cases} 0, &\tau(A_0)\notin \Q^+,\\
\sum_{i=1}^K c_i(A) (x^py^q)^i, &\tau(A_0)= \frac{p}{q},
\end{cases}$$
then the Dulac map from $\{y=Y_0\}$ to $\{x=X_0\}$ is of the form
$$ D_A(x)= Y_0X_0^{-\tau(A)} x^{\tau(A)} (1+\phi(x,A)),$$ where $\phi$ has the property $I$ of Mourtada given in Definition~\ref{def_Property_I} below.
Note that $\phi\equiv 0$, when $\tau(A_0)\notin \Q$.

In the particular case $\tau(A_0)=1$, we need the more refined form
$$ D_A(x)= Y_0X_0^{-\tau(A)} (x+ \alpha x\omega(x,\alpha) + \phi(x,A))$$
where $\omega$ is the compensator defined in \eqref{def:omega}, $\tau=1-\alpha$, and $\phi$ has the property $I$ of Mourtada, with $\phi(x,A)=O(x^{1+\delta})$ for some $\delta>0$.
\end{theorem}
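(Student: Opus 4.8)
The plan is to integrate the normal form explicitly on each orbit and read the transition map off the two sections. The equation $\dot x=\lambda_1(A)x$ gives $x(t)=x\,e^{\lambda_1 t}$, so the orbit through a point $(x,Y_0)$ of the entry section $\{y=Y_0\}$ meets the exit section $\{x=X_0\}$ at the transition time $T=\frac1{\lambda_1}\ln(X_0/x)$. Writing the $y$-equation in logarithmic form, $\dot y/y=-\lambda_2(1+Q)$, and integrating from $0$ to $T$ yields
\[
y(T)=Y_0\,e^{-\lambda_2 T}\exp\Big(-\lambda_2\int_0^T Q\big(x(t),y(t)\big)\,dt\Big).
\]
Since $e^{-\lambda_2 T}=(X_0/x)^{-\lambda_2/\lambda_1}=X_0^{-\tau}x^{\tau}$, this produces the announced leading factor $Y_0X_0^{-\tau}x^{\tau}$ and identifies the correction as $1+\phi(x,A)=\exp\big(-\lambda_2\int_0^T Q\,dt\big)$. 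In the non-resonant case $Q\equiv 0$ we obtain $\phi\equiv 0$ and the first formula is proved, with $\phi$ trivially satisfying property $I$.

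In the resonant case $\tau(A_0)=p/q$ the difficulty is that $Q$ varies along the orbit, so the integral is not explicit. The device is the resonant monomial $u=x^p y^q$, whose logarithmic derivative decouples from the rest of the system:
\[
\frac{\dot u}{u}=p\lambda_1-q\lambda_2(1+Q)=\beta(A)-q\lambda_2\,Q(u),\qquad \beta(A):=p\lambda_1(A)-q\lambda_2(A),
\]
where we use that $Q=\sum_{i=1}^K c_i(A)u^i$ is a function of $u$ alone and that $\beta(A_0)=0$ because $q\lambda_2(A_0)=p\lambda_1(A_0)$. Thus $u(t)$ solves an \emph{autonomous scalar} equation of Bernoulli type determined by its initial value $u_0=x^p Y_0^q$. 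I would solve it by separation of variables and check that its leading behaviour $u(t)\sim u_0 e^{\beta t}$ keeps $u(t)$ between its endpoints $u(0)=O(x^p)$ and $u(T)=O(x^{q\tau})$, both tending to $0$ as $x\to 0^+$; the smallness of $u$ guarantees that $\int_0^T Q(u(t))\,dt$ converges and is itself small. Substituting the resulting $u(t)$ into this integral and exponentiating gives $\phi$ as a function of the generalized monomials $x^\tau$, $x^{\beta/\lambda_1}$ and $\ln x$, from which one reads that $\phi\to 0$ as $x\to 0^+$.

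The remaining and most delicate step is to show that $\phi$ enjoys the property $I$ of Mourtada (Definition~\ref{def_Property_I}), i.e. that $\phi$ and its derivatives in $x$ have the prescribed uniform asymptotics as $x\to 0^+$, \emph{locally uniformly} in $A$ near $A_0$. This forces one to differentiate the separated-variables solution of the $u$-equation and the transition time under the integral sign, and to control the near-resonant factor $x^{-\beta/\lambda_1}$ when $\beta(A)$ is small but nonzero. This is exactly where the non-smooth dependence on $A$ is confined and where the compensator $\omega$ of \eqref{def:omega}, the universal unfolding of $-\ln x$, enters; I expect this uniformity to be the \textbf{main obstacle}, since the bare exponent $\tau(A)$ is not smooth in $A$ through the resonance and the regularity must be recovered by re-expressing the critical monomials through $\omega$.

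Finally, in the distinguished case $\tau(A_0)=1$ (so $p=q=1$ and $\beta=\lambda_1(1-\tau)=\lambda_1\alpha$ with $\alpha=1-\tau$) I would make this compensator explicit already in the leading term. Using $x^{-\alpha}=1+\alpha\,\omega(x,\alpha)$ one has
\[
x^{\tau}=x\,x^{-\alpha}=x+\alpha x\,\omega(x,\alpha),
\]
which turns the leading factor into $Y_0X_0^{-\tau}\big(x+\alpha x\,\omega(x,\alpha)\big)$, a form that is now regular in $A$ through the resonance. The contribution of $Q$ is then absorbed into a remainder $\phi(x,A)=O(x^{1+\delta})$ for some $\delta>0$, coming from the strictly higher order $u(t)=O(x^{\min(p,q\tau)})$, which yields the refined expression of the theorem.
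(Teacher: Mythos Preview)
The paper does \emph{not} prove this theorem. It appears in the Preliminaries (Section~2.7) as a recalled result, stated without proof or explicit citation; it is the classical description of the Dulac map near a hyperbolic saddle in $C^k$ normal form, essentially due to Mourtada (whence the terminology ``property~$I$ of Mourtada''). So there is no ``paper's own proof'' to compare against.

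That said, your outline is exactly the standard argument one finds in the literature. Explicit integration of the $x$-equation, the logarithmic form of the $y$-equation, and the use of the resonant monomial $u=x^py^q$ (which obeys an autonomous scalar ODE because $Q$ depends only on $u$) are the right moves, and your identification of the uniform-in-$A$ estimates for property~$I$ as the delicate point is accurate. Your derivation of the refined form for $\tau(A_0)=1$ via the identity $x^{-\alpha}=1+\alpha\,\omega(x,\alpha)$ is also correct. One small caution: when you say $u(t)$ stays ``between its endpoints'', this is true because the scalar equation $\dot u=u(\beta-q\lambda_2 Q(u))$ is autonomous and $u$ is monotone on the relevant range, but it deserves a word of justification; and the claim $\phi=O(x^{1+\delta})$ in the $\tau(A_0)=1$ case requires showing that the $Q$-integral contributes an extra positive power of $x$ beyond the already-extracted $x^\tau$, which follows from $u=O(x^{\min(1,\tau)})$ but is not quite automatic from what you wrote.
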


\begin{definition}\label{def_Property_I} A function $\phi(y,A)$ has the property (I) of Mourtada if $\phi$ is $C^K$ for some $K$ on $(0,y_0)\times W$, where $W$ is a neighborhood of $A_0$ in $A$-space, and if there exists some neighborhood $W'$ of the origin in $A$-space such that for all $0\leq j\leq K$,
$$\lim_{y\to0} y^i\frac{\partial^j\phi}{\partial y^j}(y,\lambda)=0,$$
uniformly for $\lambda \in W'$. \end{definition}

\begin{theorem}\label{Dulac_saddle-node} \cite{DRR2}  We consider a polynomial normal form for a family depending on a multi-parameter $A$ in the neighborhood of a saddle-node with eigenvalues $0, -\lambda<0$, for $A=A_0$. If the system has the following normal form near the saddle-node \begin{align}\begin{split}
\dot x&=(x^2+\eta(A))( 1+ C(A)x^2)= F(x),\\
\dot y&=-\lambda y,\end{split}\end{align}
with $\eta(A_0)=0$, then \begin{enumerate}
 \item {\bf Case of central transition:} for $\eta>0$, the Dulac map from $\{x=-X_0\}$ to $\{x=X_0\}$ is linear of the form $D_A(y) = \eps(A)y$, with $\eps(A)>0$ exponentially small in $\sqrt{\eta}$;
\item  {\bf Case of stable-center transition:} the Dulac map $D_A(x)$ from $\{y=Y_0\}$ to $\{x=X_0\}$ is flat in $x$, as well as all its partial derivatives in $x$ and in the parameters.

\end{enumerate}
\end{theorem}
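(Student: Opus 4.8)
The plan is to exploit the fact that the normal form is \emph{decoupled}: the $x$-equation $\dot x=F(x)$ does not involve $y$, and the $y$-equation $\dot y=-\lambda y$ is linear with a rate that is constant in $x$. Consequently the flow integrates explicitly — along any orbit $y(t)=y(0)e^{-\lambda t}$ — and the whole problem reduces to the scalar quadrature giving the transition time in the $x$-direction,
\[
T(x_0,x_1;A)=\int_{x_0}^{x_1}\frac{dx}{F(x)}=\int_{x_0}^{x_1}\frac{dx}{(x^2+\eta(A))(1+C(A)x^2)}.
\]
Both Dulac maps are then read off from the relation $y_{\mathrm{out}}=y_{\mathrm{in}}\,e^{-\lambda T}$, so everything comes down to the asymptotics of $T$ in the two geometric regimes.

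For the central transition ($\eta>0$) I would first note that $F(x)>0$ on $[-X_0,X_0]$ for $X_0$ small, so a trajectory crosses monotonically from $x=-X_0$ to $x=X_0$ in a finite time $T(\eta)=T(-X_0,X_0;A)$ that is \emph{independent of the entry value} $y$. Linearity of $D_A(y)=\eps(A)y$ with $\eps(A)=e^{-\lambda T(\eta)}$ is then immediate from the linearity of the $y$-equation. To establish the exponential smallness I would evaluate the integral by partial fractions,
\[
T(\eta)=\frac{1}{1-C\eta}\int_{-X_0}^{X_0}\left(\frac{1}{x^2+\eta}-\frac{C}{1+Cx^2}\right)dx=\frac{2}{\sqrt{\eta}}\arctan\frac{X_0}{\sqrt{\eta}}+O(1)=\frac{\pi}{\sqrt{\eta}}+O(1),
\]
as $\eta\to0^+$, so that $\eps(A)=e^{-\lambda T(\eta)}$ is of order $e^{-\lambda\pi/\sqrt{\eta}}$, i.e. exponentially small as $\eta\to0^+$ — the sense in which the statement calls it exponentially small in $\sqrt{\eta}$.

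For the stable-center transition I would parametrize the entry section $\{y=Y_0\}$ by its $x$-coordinate and write $D_A(x)=Y_0\,e^{-\lambda T(x,X_0;A)}$. The mechanism producing flatness is the competition between strong contraction toward the center manifold (exponential rate $\lambda$ in $y$) and the slow drift along it: at $A_0$ one has $F(s)\sim s^2$ near the degenerate point, so $T(x,X_0;A_0)$ diverges like $1/x$ as the entry point approaches the strong stable manifold. Since $\partial_x T=-1/F(x)$ blows up only polynomially while $e^{-\lambda T}$ decays like $e^{-\lambda/x}$, every derivative $\partial_x^j D_A$ is a polynomial expression in $1/F$ and its derivatives multiplied by $e^{-\lambda T}$, and therefore tends to $0$; differentiating additionally in the parameters only brings down further polynomially bounded factors coming from $\partial_A T$ and $\partial_A\lambda$. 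This yields flatness in $x$ together with all its partial derivatives in $x$ and in the parameters.

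The step I expect to be the main obstacle is making the flatness estimate of the second case \emph{uniform} in the multi-parameter $A$ on a full neighborhood of $A_0$: one must bound $\partial_A T$ and control the location of the stable separatrix uniformly as $\eta(A)$ passes through $0$ and the zero structure of $F$ changes, and verify that the polynomial bounds on the successive derivatives of $1/F$ are uniform. Once this uniform domination of the polynomial factors by the exponential $e^{-\lambda T}$ is secured, both conclusions follow; in the first case the only remaining ingredient is the elementary but decisive observation that the $\arctan$ contributes the factor $\pi/2$ in the limit $\eta\to0^+$, which pins down the exponent $\lambda\pi/\sqrt{\eta}$.
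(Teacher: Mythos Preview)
The paper does not prove this theorem; it is quoted from \cite{DRR2} and used only as a black box in the later cyclicity arguments. Your approach is the standard one and is correct: because the normal form is decoupled, both Dulac maps reduce to $y\mapsto y\,e^{-\lambda T}$ with $T=\int dx/F(x)$, and your partial-fraction evaluation giving $T(\eta)=\pi/\sqrt{\eta}+O(1)$ yields exactly the claimed exponential smallness $\eps(A)\sim e^{-\lambda\pi/\sqrt{\eta}}$ for the central transition, while the divergence $T\sim 1/x$ (with only polynomial growth of all $x$- and $A$-derivatives of $T$) against $e^{-\lambda T}$ yields flatness for the stable--center transition. You are also right that the only genuinely delicate point is the \emph{uniformity} of case~2 in $A$ as $\eta$ crosses $0$; in \cite{DRR2} this is handled essentially as you outline, by recentering at the moving stable separatrix and verifying that the polynomial bounds on $1/F$ and its successive derivatives persist uniformly.
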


\section{Finite cyclicity of convex graphics through a nilpotent saddle of multiplicity $3$}

It was shown in \cite{ZR} that a graphic through a nilpotent saddle of codimension $3$ has finite cyclicity as soon as the first return map along the graphic has a derivative different from $1$. This excludes the value $a_0=-\frac12$ in \eqref{normal_form_family}.
This hypothesis was only used in studying the finite cyclicity of the limit periodic sets in $Sxhh1$ and $Sxhh5$. We now consider the case $a_0=-\frac12$.  We show that all limit periodic sets in $Sxhh1$ have finite cyclicity.
Under the additional hypothesis that the line on the blow-up sphere is a fixed connection, we also show that all limit periodic sets in $Sxhh1$ have finite cyclicity.

\begin{theorem}\label{thm:sxhh1} We consider a convex graphic through a nilpotent saddle of multiplicity 3 with $a_0=-\frac12$ and such that the derivative of the first return map $\gamma^{\ast}=P'(0)\neq1$. Then all limit periodic sets in Sxhh1 have finite cyclicity.\end{theorem}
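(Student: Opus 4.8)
The plan is to reduce the finite cyclicity of each limit periodic set in $Sxhh1$ to counting zeros of a one-dimensional displacement map, constructed by composing the Dulac transition maps recalled in Theorems~\ref{thm_Dulac_type_1}--\ref{Dulac_saddle-node} with regular $C^k$ transitions, and then to apply a derivation-division argument. The limit periodic sets in $Sxhh1$ join $P_4$ and $P_3$, which (by Table~\ref{eigenvalue}, specialized at $a_0=-\tfrac12$) have eigenvalue data $r=\mp\tfrac12$, $\rho=\pm\tfrac12$, so that in both cases $\sigma_0=2(1-2a_0)=4\in\N$. This is the resonant integer case of Theorem~\ref{thm_Dulac_type_1}, so the $\tilde y_i$-component of each Dulac map $D_i$ carries the compensator term $\eta_i(\nu)\rho_0^{\,p}(\nu/\nu_0)^{\bar\sigma}\omega(\nu/\nu_0,\alpha_i)$ together with the quasi-linear term $(\nu/\nu_0)^{\bar\sigma}(\tilde y_i+\phi_i)$, where $\phi_i$ has property $J$ (Remark~\ref{remark}).

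First I would set up, for a fixed direction $\ov M_0\in\S^2$ and the corresponding sector in parameter space, the two transversal sections to $\Gamma$ and parametrize them by $(\tilde y,\nu)$ using the invariant foliation $\{r\rho=\nu\}$. Because the foliation is invariant, the $\nu$-coordinate is preserved under every transition, so the full return map collapses to a genuine one-parameter family (in $\nu$) of one-dimensional maps in $\tilde y$; its fixed points are the limit cycles we must bound. I would then write the displacement map $\delta(\tilde y,\nu)=D(\tilde y,\nu)-\tilde y$ as the composition of the two Dulac maps at $P_3$ and $P_4$ (following the flow, possibly backwards) with the regular transition along $\Gamma$ on the blow-up sphere, whose derivative at the base point is exactly $\gamma^{\ast}=P'(0)$. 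The hypothesis $\gamma^{\ast}\neq1$ is precisely what makes the leading linear part of $\delta$ nonvanishing, so that the regular case of the argument goes through; the reason the integer-resonance case $a_0=-\tfrac12$ was excluded in \cite{ZR} is that the compensator $\omega$ produces a logarithmic term that must be controlled, and one needs $\gamma^\ast\neq1$ to dominate it.

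The key computation is to extract the leading behavior of $\delta$. Using \eqref{eq21d}, the composite map is, to leading order, of the form $\tilde y\mapsto K(\nu)\tilde y + L(\nu)\omega(\nu/\nu_0,\alpha)+(\text{higher-order, property-}J\ \text{terms})$, where $K(\nu)$ encodes the product of the hyperbolicity-type factors and the regular transition, and hence $K(0)$ is governed by $\gamma^{\ast}$. I would show that the coefficient of the genuinely nonlinear/logarithmic part is nonzero precisely because $\gamma^\ast\neq1$, so that $\partial\delta/\partial\tilde y$ does not vanish identically and the number of sign changes of $\delta$ in $\tilde y$ is bounded uniformly in $\nu$. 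The derivation-division step then differentiates $\delta$ in $\tilde y$, divides by the dominant monomial, and uses Rolle's theorem together with the property-$J$ estimates of Remark~\ref{remark} (all small-order $\tilde y$-derivatives of $\phi$ are $O(\nu^\beta)$, $\beta>0$) to conclude that $\delta$ has at most finitely many zeros on the sector. Since $\S^2$ is compact, finitely many such sectors cover all directions $\ov M$, and the cyclicity of $Sxhh1$ is the maximum of the sectorial bounds.

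The main obstacle I expect is the uniform control of the compensator-bearing terms across the whole sector of directions $\ov M\in\S^2$: as $\alpha_i=\bar\sigma_i-\sigma_0\to0$ the compensator $\omega(\cdot,\alpha_i)$ degenerates to $-\log$, and one must verify that the asymptotic expansions and the property-$J$ remainder estimates are uniform in the parameters and that the sign of the leading nonlinear coefficient is stably controlled by $\gamma^{\ast}-1\neq0$. Handling the interplay between the linear term $K(\nu)$ and the logarithmic term $L(\nu)\omega$, and ensuring that the derivation-division procedure terminates with a bound independent of $\nu$ on each sector, is where the real work lies; the remaining pieces (choice of sections, invariance of the foliation, compactness extraction of a finite subcovering) are by now standard in the DRR framework.
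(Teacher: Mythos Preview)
Your proposal misidentifies both the source of the difficulty and the role of the hypothesis $\gamma^\ast\neq1$, and it omits the key ingredient that makes the argument go through.

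First, the compensator $\omega(\nu/\nu_0,\alpha_i)$ in \eqref{eq21d} depends only on $\nu$, not on $\tilde y_i$. Hence the resonance $\sigma_0=4\in\N$ at $P_3,P_4$ merely contributes to the constant-in-$\tilde y$ term of the displacement map and is harmless; it is \emph{not} the reason $a_0=-\tfrac12$ was excluded in \cite{ZR}. The actual obstruction, as the paper explains, is that the family rescaling \eqref{family_rescaling} has a saddle point on $D_{\bar\mu}$ whose hyperbolicity ratio $\tau$ can equal $1$: since the divergence of \eqref{family_rescaling} is identically $(2a+1)\bar x+\bar\mu_3$, this happens precisely when $a_0=-\tfrac12$ and $\bar\mu_3=0$. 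Your proposal never mentions this saddle, nor does it distinguish the intermediate graphics $Sxhh1b$ from the lower boundary graphic $Sxhh1c$ through it; these require different arguments.

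Second, for the intermediate graphics the displacement map has leading linear coefficient essentially $T_\nu'(0)-S_\nu'(0)$, where $S_\nu=R_\nu^{-1}$ is the \emph{exterior} regular transition (governed by $\gamma^\ast$) and $T_\nu$ is the \emph{interior} transition across the family rescaling. Knowing $\gamma^\ast\neq1$ gives $S_\nu'(0)\neq1$, but that alone does not bound the coefficient away from zero; you also need $T_\nu'(0)$ close to $1$. The paper obtains this from the crucial observation that for $a=-\tfrac12$ and $\bar\mu_3=0$ the family rescaling is \emph{Hamiltonian}, with explicit first integral $H(\bar x,\bar y)=\tfrac12\bar y^2-\tfrac12\bar x^2\bar y+\bar\mu_2\bar y-\bar\mu_1\bar x$, and shows via an explicit normalizing computation that $T_0\equiv id$. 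This integrability is entirely absent from your outline, yet it is the heart of the proof. For $Sxhh1c$ the same Hamiltonian structure forces $\tau=1$ at the inner saddle, and one then runs a derivation-division argument on the compensator $\omega(\tilde y_3,\alpha)$ coming from that saddle (Theorem~\ref{Dulac_saddle}), not from $P_3,P_4$; the nonvanishing final coefficient is again $T_\nu'(0)-S_\nu'(0)+O(\nu)\approx 1-1/\gamma^\ast\neq0$.
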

\begin{proof} Without loss of generality we can suppose that the limit periodic set $\Gamma$ joins $P_3$ and $P_4$ (see Figure~\ref{fig.strat}). Note that the finite cyclicity of the upper boundary graphic of $Sxhh1$ was
proved in \cite{ZR}. Therefore, we only need to prove that the
intermediate graphics $Sxhh1b$ and the lower boundary graphic of $Sxhh1c$ have
finite cyclicity. The only place where the hypothesis  $a_0\neq-\frac12$ was used in \cite{ZR} is when the hyperbolicity ratio $\tau(\ov{M}_0)$ (i.e. the quotient of minus the negative eigenvalue to the positive one) is equal to $1$ at the saddle point of \eqref{family_rescaling}. Since the divergence of \eqref{family_rescaling} is identically equal to $\bar{\mu}_3$ for $a_0=-\frac12$, we need only consider the case $A_0=(-\frac12, \ov{\mu}_1,\ov{\mu}_2,0,0)$.
\begin{figure}[!h]
\begin{center}
\includegraphics[angle=0, width=0.37\textwidth]{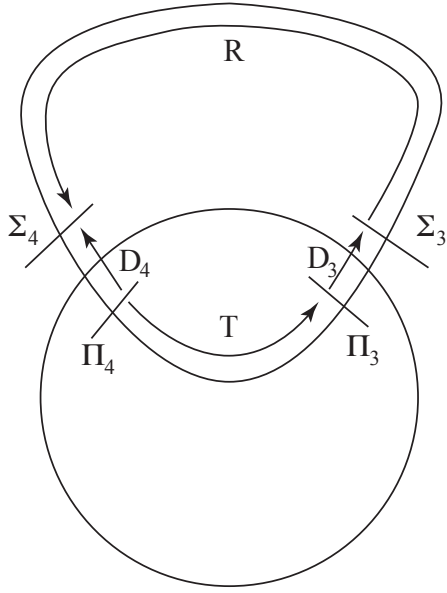}
\caption{Transition map for the hh-graphics of saddle type}
\label{transition_map}
\end{center}
\end{figure}

Let $\ov{\Gamma}$ be any intermediate or lower boundary graphic of
$Sxhh1$. To study its cyclicity, we take coordinates $(r,\rho,\ov{y}_i) $ in the neighborhood of $P_i$, $i=3,4$, where $r= x$ (resp. $-x$) for $P_3$ (resp. $P_4$) and $\ov{y}_i=\ov{y}-\frac{1-2a}{2}$ (hence $\ov{y}_i=0$ at $P_i$). A $C^k$-change of coordinates to normal form in the neighborhood of $P_i$ can be taken of the form $\tilde{y}_i=\ov{y}_i + f_i(r,\rho,\ov{y}_i)$. Let us   take sections $\Sigma_i=\{r=r_0\}$ and $\Pi_i=\{\rho=\rho_0\}$  as shown in Fig.~\ref{transition_map} in the
normal form coordinates $(r, \rho, \tilde{y}_i)$ in the
neighborhood of the singular point $P_i$ ($i=3, 4$). We will study the
displacement map $L:\Pi_4\longrightarrow\Sigma_3$ defined by
\begin{equation}
L=R^{-1}\circ D_4-D_3\circ T,\label{eq:L}
\end{equation}
where $R: \Sigma_3\longrightarrow\Sigma_4$ and $T: \Pi_4\longrightarrow \Pi_3$ are the transition maps along the
regular orbits in the normal form coordinates, and $D_i:\Pi_i\longrightarrow \Sigma_i$ are the Dulac maps.
We will study the maximum number of small
roots of $L=0$.

We decide to choose $(\nu,\tilde{y}_i)$ as coordinates on the sections $\Pi_i$ and $\Sigma_i$. The maps $R$ and $T$ are two-dimensional but, since they preserve the $\nu$-coordinate, we will cheat a little and identify them with their second component  which depends on $\nu$, and which we denote $R_\nu$ and $T_\nu$. We denote by $L_\nu$ the corresponding second component of $L$ in \eqref{eq:L}. For $\nu\in [0,\nu_0)$, $R_\nu$ and $T_\nu$ are regular $C^k$-diffeomorphisms. Let $S_\nu=R_\nu^{-1}$.
The Dulac maps $D_i$ near $P_4$ (following the flow backwards) and near $P_3$ are calculated in Theorem~\ref{thm_Dulac_type_1}, with $\sigma_0 = 4$.

Let
$$\alpha_{34}= \ov{\sigma}_3- \ov{\sigma}_4= \nu O(1).$$ The map $L_\nu$ has the form
\begin{equation} L_\nu(\tilde{y})= m_0(\nu, \lambda) + \left(\frac{\nu}{\nu_0}\right)^{\ov{\sigma}_3}\left[T_\nu'(0)-  S_\nu'(0)\left(\frac{\nu}{\nu_0}\right)^{-\alpha_{34}}+O(\nu)\right]\tilde{y}_4 +
\left(\frac{\nu}{\nu_0}\right)^{\ov{\sigma}_3} o(\tilde{y}_4). \label{equation_L}\end{equation}
It is clear that an intermediate graphic has cyclicity $1$ as soon as $T_\nu'(0)-  S_\nu'(0)\nu^{\ov{\sigma}_4- \ov{\sigma}_3}$ is bounded away from $0$ for $A$ in a neighborhood
of $A_0=(-\frac12, \ov{M}_0,0)$. This is precisely the case when $T_\nu'(0)$ is close to $1$.
Indeed, we know that $S_\nu'(0)\neq1$. Also,
$$\left(\frac{\nu}{\nu_0}\right)^{-\alpha_{34}}= e^{-\alpha_{34} \log(\nu/\nu_0)} = 1+O(\nu^{1-\delta})$$
for some small $\delta$, since $\alpha_{34} = O(\nu)$.
Hence, it suffices to show that $T_0'(0)=1$ when  $A_0=(-\frac12, \ov{\mu}_1,\ov{\mu}_2,0,0)$. We show the stronger property that $T_0\equiv id$ for such an $A_0$. For this purpose, we use that the system \eqref{family_rescaling} is Hamiltonian for $a=-\frac12$ and $\ov{\mu}_3=0$: the trajectories are level curves $H(\ov{x},\ov{y})=C$ of the Hamiltonian
$$H(\ov{x},\ov{y})= \frac 12 \ov{y}^2-\frac12 \ov{x}^2\ov{y} +\ov{\mu}_2\ov{y}-\ov{\mu}_1\ov{x}.$$ Hence, we must explain the link between the constant $C$ and the corresponding normalizing coordinates $\tilde{y}_3$ (resp.  $\tilde{y}_4$) on $\Pi_3$ (resp. $\Pi_4$). For this, we must not forget that the family rescaling has been obtained by putting $\rho=1$ after the blow-up.
For $r=0$, the system in $(\rho, \ov{y})$-coordinates is given by
\begin{align}\begin{split}
\dot \rho&= \mp \rho(\ov{y}-\frac12 +\ov{\mu}_2\rho^2),\\
\dot{\ov{y}}&=\pm2 \ov{y}\mp2 \ov{y}^2 \mp 2\ov{\mu}_2\ov{y}\rho^2+\ov{\mu}_1\rho^3,\end{split} \label{P_i_r=0}\end{align}
where the sign $+$ (resp. $-$) comes from putting $\ov{x}=+1$ (resp $\ov{x}=-1$).
The function $\rho^{-5}$ is an integrating factor of \eqref{P_i_r=0}, which yields first integrals
$$\ov{H}_\pm = \frac{\ov{y}^2}{2\rho^4} -\frac{\ov{y}}{2\rho^4}+\ov{\mu}_2\frac{\ov{y}}{\rho^2}\mp\ov{\mu}_1\frac1{\rho}.$$
We need to localize at $P_3$ and $P_4$ by letting $z= \ov{y} -1$. Then
$$\ov{H}_\pm = \frac{z^2}{2\rho^4} +\frac{z}{2\rho^4}+\ov{\mu}_2\frac{z+1}{\rho^2}\mp\ov{\mu}_1\frac1{\rho},$$
which means that the trajectories are given by
$$Z=\frac{z^2}2+ \frac{z}2 +\ov{\mu}_2(z+1)\rho^2\mp\ov{\mu}_1\rho^3= C_\pm \rho^4,$$
The change of coordinate $z\mapsto Z$ is invertible for small $z$ and is precisely the normalizing coordinate.
Then it is easy to see that on sections $\Pi_3$ and $\Pi_4$ with common equation $\{\rho=\rho_0\}$ we have $\tilde{y}_3=C_+\rho_0^4$ and $\tilde{y}_4=C_-\rho_0^4$, and also that $C_+= C=C-$ for a given trajectory. Hence $T_0\equiv id$, which means that $T'$ is close to $1$ for $A$ close to $A_0$ in the neighborhood of the limit periodic set.

\

We now only need to consider the lower graphic  Sxhh1c for $A_0=(-\frac12, \ov{\mu}_1,\ov{\mu}_2,0,0)$. Let $\tau(M)=1-\alpha $ be the hyperbolicity ratio at the saddle point of \eqref{family_rescaling}.

Using Theorem~\ref{Dulac_saddle}, the regular transition near the hyberbolic saddle in suitable normal form coordinates has the form
$$V_\nu(\tilde{y})= m_0(A)+m_1(A)\alpha\omega(\tilde{y},\alpha)\tilde{y} + m_2(A)\tilde{y}+ O\left(\tilde{y}^2\,\omega(\tilde{y},\alpha)\right),$$
with $m_0(A_0)=m_1(A_0)= m_2(A_0)-1=0$,
which yields that the transition map $T_\nu$ has the form
\begin{align}\begin{split}T_\nu(\tilde{y}_3)&= n_0(A)+n_1(A)\alpha\tilde{y}_3\omega(\tilde{y}_3,\alpha)(1+ \phi_1(\tilde{y}_3, \alpha))\\
&\qquad\quad + n_2(A)\tilde{y}_3(1+ \phi_2(\tilde{y}_3, \alpha))+ O\left(\tilde{y}^2\,\omega(\tilde{y},\alpha)\right),\end{split}\label{lower_lps}\end{align}
with $n_0(A_0)=n_1(A_0)= n_2(A_0)-1=0$, where the functions $\phi_j$ have the property (I) of Mourtada (see Definition~\ref{def_Property_I}).

This yields that $L_\nu(\tilde{y}_3)$ has the  form \begin{align}\begin{split}L_\nu(\tilde{y}_3)&= \tilde{n}_0(A,\nu)+n_1(A,\nu)\left(\frac{\nu}{\nu_0}\right)^{\ov{\sigma}_3}
\alpha\tilde{y}_3\omega(\tilde{y}_3,\alpha)(1+ \psi_1(\tilde{y}_3,\nu)) \\
&\qquad+\left(\frac{\nu}{\nu_0}\right)^{\ov{\sigma}_3}\left[n_2(A_\nu) - S_\nu'(0)\left(\frac{\nu}{\nu_0}\right)^{\alpha_{34}} + O(\nu)\right] \tilde{y}_3(1+ \psi_2(\tilde{y}_3,\nu)),\end{split}\label{L_tilde}\end{align}
where $\tilde{n}_0(A_0,0)=\alpha(A_0,0)=0$. Let $\tilde{n}_2(A, \nu)=n_2(A_\nu) - S_\nu'(0)\left(\frac{\nu}{\nu_0}\right)^{\alpha_{34}} + O(\nu)$, then we have $\tilde{n}_2(A_0,0)\neq0$. $\psi_1,\psi_2$ are finite sums of products of functions with property (I) or (J).

By Rolle's theorem, the number of zeroes of $L_\nu$ is at most $1$ plus the number of zeroes of $N_{1,\nu}(\tilde{y}_3)=\left(\frac{\nu}{\nu_0}\right)^{-\ov{\sigma}_3}\frac{dL}{d\tilde{y}_3}(\tilde{y}_3)$. Considering that the derivative of $\omega(\tilde{y}_3,\alpha)$ is $1+\alpha \omega(\tilde{y}_3,\alpha)$, we have
$$N_{1,\nu}(\tilde{y}_3)=n_1(A,\nu)[(1-\alpha) \omega(\tilde{y}_3,\alpha)-1](1+ \xi_1(\tilde{y}_3,\nu)) + \tilde{n}_2(A,\nu)(1+ \xi_2(\tilde{y}_3,\nu)),$$
where $\xi_1,\xi_2$ are finite sums of functions with property (I) and (J).
The number of zeroes of $N_{1,\nu}(\tilde{y}_3)$ is the same as the number of zeroes of $$N_{2,\nu}(\tilde{y}_3)= \frac{N_{1,\nu}(\tilde{y}_3)}{[(1-\alpha) \omega(\tilde{y}_3,\alpha)-1](1+ \xi_1(\tilde{y}_3,\nu))}.$$
By Rolle's theorem again, this number is  at most $1$ plus the number of zeroes of $N_{3,\nu}(\tilde{y}_3)=\frac{dN_{2,\nu}}{d\tilde{y}_3}(\tilde{y}_3)$, given by
$$N_{3,\nu}(\tilde{y}_3)= -\tilde{n}_2(A,\nu)\frac{(1-\alpha)\tilde{y}_3^{-1-\alpha}}{[(1-\alpha) \omega(\tilde{y}_3,\alpha)-1]^2}(1+\chi_2(\tilde{y}_3,\nu))\neq0,$$
with $\chi_2$ a sum of functions with property (I) and (J), since it is standard that  $x^n\omega(x,\alpha)$ is small for positive $n$ and small $(x,\alpha)$.
\end{proof}

\begin{theorem}\label{thm:sxhh5} We consider a convex graphic through a nilpotent saddle of multiplicity 3 with $a_0=-\frac12$ passing through the points $P_3$ and $P_4$ of the blow-up, and such that the derivative of the first return map $\gamma^{\ast}=P'(0)\neq1$. We also suppose that there is a fixed connection on the blow-up sphere along a line joining $P_1$ and $P_2$ (corresponding to $\mu_1=0$ in \eqref{normal_form_family}. Then all limit periodic sets in Sxhh5 have finite cyclicity.\end{theorem}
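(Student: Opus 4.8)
The plan is to follow the strategy of Theorem~\ref{thm:sxhh1}, the new ingredient being that the limit periodic sets of Sxhh5 also run through $P_1$ and $P_2$ and along the fixed heteroclinic connection joining them.

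I begin with the same reduction of the parameter space. The argument of \cite{ZR} applies except when the hyperbolicity ratio at the saddle of \eqref{family_rescaling} equals $1$; since the divergence of \eqref{family_rescaling} is identically $\ov{\mu}_3$ when $a_0=-\frac12$, this forces $\ov{\mu}_3=0$, and the fixed-connection hypothesis $\mu_1=0$ forces $\ov{\mu}_1=0$ on the blow-up sphere. It therefore suffices to treat $A_0=(-\frac12,0,\ov{\mu}_2,0,0)$. At this value all of $P_1,P_2,P_3,P_4$ are resonant with $\sigma_0=4$, so each of their Dulac maps is of the second kind in Theorem~\ref{thm_Dulac_type_1}, carrying a compensator and an $\eta_i$-term; moreover the interior saddles of \eqref{family_rescaling} on the line $\{\ov{y}=0\}$, namely $(\pm\sqrt{2\ov{\mu}_2},0)$, have eigenvalues $\pm\sqrt{2\ov{\mu}_2}$ and hence hyperbolicity ratio $\tau(A_0)=1$, so their passage is governed by the refined form of Theorem~\ref{Dulac_saddle}.

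I then set up the displacement map $L=R^{-1}\circ D_4-D_3\circ T$ as in \eqref{eq:L}: the map $R^{-1}$ is the return along the lower graphic $\Gamma$, so that the hypothesis $\gamma^{\ast}=P'(0)\neq 1$ again yields $S_\nu'(0)\neq 1$, while the long transition $T\colon\Pi_4\to\Pi_3$ now factors through the upper part of the limit periodic set and decomposes as a composition of regular $C^k$ transitions together with the Dulac maps at the resonant points met along the way, namely $P_1,P_2$ and the interior saddles $(\pm\sqrt{2\ov{\mu}_2},0)$, the connection along $\{\ov{y}=0\}$ being fixed because $\ov{\mu}_1=0$. The decisive point is once more that, at $A_0$, every intervening regular transition is the identity in the normalizing coordinates. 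This follows from the Hamiltonian structure of \eqref{family_rescaling} at $A_0$: with $\ov{\mu}_1=0$ the first integral $\ov{H}_\pm$ of the $r=0$ flow \eqref{P_i_r=0} reduces to the single function
\begin{equation*}
\ov{H}=\frac{\ov{y}\,(\ov{y}-1+2\ov{\mu}_2\rho^2)}{2\rho^4},
\end{equation*}
so that $\{\ov{H}=0\}$ is exactly the union of the fixed connection $\{\ov{y}=0\}$ and the upper arc $\{\ov{y}=1-2\ov{\mu}_2\rho^2\}$ through $P_3$ and $P_4$. As in Theorem~\ref{thm:sxhh1}, the level-curve constant is the common normalizing coordinate on the facing sections at each singular point, so the regular transitions, including the one along the fixed line, reduce to the identity at $A_0$ and have derivative close to $1$ nearby.

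With these identities, the finite cyclicity follows by the derivation--division scheme of Theorem~\ref{thm:sxhh1}. Inserting the refined saddle maps of Theorem~\ref{Dulac_saddle} for $\tau(A_0)=1$ and the second-kind maps of Theorem~\ref{thm_Dulac_type_1} into $L$, one obtains for $L_\nu$ an expression of the shape \eqref{L_tilde}; the Hamiltonian identities reduce its leading coefficient, analogous to $\tilde{n}_2(A_0,0)$, to an expression controlled by $S_\nu'(0)\neq 1$, hence nonzero. Repeated applications of Rolle's theorem, dividing out at each stage the dominant compensator factor $[(1-\alpha)\omega(\tilde{y},\alpha)-1]$, then bound the number of zeros of $L_\nu$ uniformly in $\nu$, exactly as in the last step of Theorem~\ref{thm:sxhh1}. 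I expect the main obstacle to be the bookkeeping in the long composition defining $T$: because $P_1,P_2,P_3,P_4$ are all resonant with $\sigma_0=4$ and the two interior saddles are resonant with $\tau=1$, one must verify that the several compensator and $\eta_i$ contributions do not cancel the surviving leading coefficient of $L_\nu$, and that every error term retains property (I) or (J) under composition, so that the derivation--division procedure closes uniformly.
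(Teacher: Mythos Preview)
Your overall strategy---reduce to $\ov{\mu}_3=0$, use integrability of the family rescaling to get $T_0\equiv\mathrm{id}$, then run the derivation--division argument of Theorem~\ref{thm:sxhh1}---is the same as the paper's. But you have misread the geometry of Sxhh5, and this misreading drives the complications you anticipate in your last paragraph.

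The limit periodic sets of Sxhh5 that need treatment (the intermediate ones Sxhh5b and the lower boundary Sxhh5c) do \emph{not} pass through $P_1$ and $P_2$. Those two points lie on the equator $\rho=0$ of the blow-up half-sphere; the line $\{\ov{y}=0\}$ joining them carries the two \emph{interior} saddles $(\pm\sqrt{2\ov{\mu}_2},0)$ of the family rescaling, and it is only these interior saddles that the lower graphic Sxhh5c meets. The intermediate graphics Sxhh5b meet no singular point at all between $P_4$ and $P_3$, so their $T_\nu$ is a genuine regular $C^k$ diffeomorphism---exactly as for Sxhh1b---and the argument that $T_0\equiv\mathrm{id}$ goes through verbatim (the paper also notes the time-reversing symmetry $(\ov{x},t)\mapsto(-\ov{x},-t)$ at $\ov{\mu}_1=\ov{\mu}_3=0$, which gives this directly). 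Consequently there are no Dulac maps at $P_1,P_2$ to insert into $T$, no extra $\eta_i$-terms, and no bookkeeping problem of the kind you flag.

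A related slip: the criterion from \cite{ZR} that already handles $\ov{\mu}_3\neq0$ is that the \emph{product} $\tau_1\tau_2$ of the hyperbolicity ratios at the two interior saddles differs from $1$, not a single ratio. For Sxhh5c the transition $T_\nu$ is the composition of the two saddle passages along the fixed connection, and the paper simply observes that this composite has the same form as \eqref{lower_lps} with $\tau$ replaced by $\tau_1\tau_2=1-\alpha$; then the final paragraph of the proof of Theorem~\ref{thm:sxhh1} applies without change. Once you drop $P_1$ and $P_2$ from your decomposition of $T$, your proposal collapses to exactly this.
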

\begin{proof} The proof is very similar to that of Theorem~\ref{thm:sxhh1}. When $\ov{\mu}_3\neq0$, then the product of the hyperbolicity ratios $\tau_1\tau_2$  at the two saddle points is different from $1$, and the finite cyclicity was proven in \cite{ZR}.
When $\ov{\mu}_3=0$, then the family rescaling \eqref{family_rescaling} is integrable, both because it is symmetric and Hamiltonian. Hence, for the intermediate limit periodic sets, the transition map $T_\nu$ is close to the identity.
As for the lower periodic set through the two saddle points, the transition map $T_\nu$ has the same form as in \eqref{lower_lps} with $\tau=\tau_1\tau_2= 1-\alpha$.
  \end{proof}

\begin{remark} We conjecture that the hypothesis that $\mu_1=0$ in Theorem~\ref{thm:sxhh5} can be dropped, but we have not been able to prove it. \end{remark}

\begin{corollary}\label{thm:saddle} We consider a convex graphic through a nilpotent saddle of multiplicity 3 with $a_0=-\frac12$ passing through the points $P_3$ and $P_4$ of the blow-up, and such that the derivative of the first return map $\gamma^{\ast}=P'(0)\neq1$. We also suppose that there is a fixed connection on the blow-up sphere along a line joining $P_1$ and $P_2$. Then the graphic has finite cyclicity.\end{corollary}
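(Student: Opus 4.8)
The plan is to prove the corollary purely by \emph{assembly}, combining Theorems~\ref{thm:sxhh1} and~\ref{thm:sxhh5} with the results of \cite{ZR}. The starting observation is that, via the blow-up of the family (Section~2.3) together with the compactness of the parameter sphere $\S^2$, the finite cyclicity of the convex graphic reduces to proving the finite cyclicity of each \emph{limit periodic set} $\ov{\Gamma}$ that arises in the family rescaling \eqref{family_rescaling} on $D_{\ov{\mu}}$. So the first step I would make explicit is this reduction: once every limit periodic set attached to the graphic has finite cyclicity, the standard DRR compactness argument (extract a finite subcovering of $\S^2$ and take the maximum of the local bounds) yields finite cyclicity of the graphic itself. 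For a convex graphic through a nilpotent saddle of multiplicity $3$ joining $P_3$ and $P_4$, the complete list of these limit periodic sets is exactly the one recorded in Table~\ref{tab.shhconvex}, namely the families $Sxhh1$ through $Sxhh10$, including their boundary and intermediate members.

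The second step is to invoke \cite{ZR} to dispatch all families except two. There it was shown that, whenever the derivative $\gamma^{\ast}$ of the first return map differs from $1$, every limit periodic set of such a convex graphic has finite cyclicity; the only places where the genericity assumption $a_0\neq-\frac12$ was actually used were the families $Sxhh1$ and $Sxhh5$, as recalled at the opening of Section~3. The mechanism behind this, which I would spell out briefly, is that for $a_0=-\frac12$ the relevant hyperbolicity ratio $\tau(\ov{M}_0)$ at the saddle of the family rescaling equals $1$, precisely the degenerate situation the earlier argument could not accommodate, and this degeneracy only affects $Sxhh1$ and $Sxhh5$. Consequently, for $a_0=-\frac12$ and under $\gamma^{\ast}\neq1$, all limit periodic sets \emph{other than} those in $Sxhh1$ and $Sxhh5$ retain finite cyclicity with no change to the proofs of \cite{ZR}.

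The third step closes the two remaining families using the theorems just established. Theorem~\ref{thm:sxhh1} gives the finite cyclicity of every limit periodic set in $Sxhh1$ for $a_0=-\frac12$ under the single hypothesis $\gamma^{\ast}\neq1$; Theorem~\ref{thm:sxhh5} gives the finite cyclicity of every limit periodic set in $Sxhh5$ under the same hypothesis together with the fixed-connection condition $\mu_1=0$ along the line joining $P_1$ and $P_2$ on the blow-up sphere. These are exactly the hypotheses assumed in the corollary. Combining the three steps, every limit periodic set attached to the graphic has finite cyclicity, so by the reduction of the first step the graphic has finite cyclicity.

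Since the two analytically delicate families have already been resolved in Theorems~\ref{thm:sxhh1} and~\ref{thm:sxhh5}, no genuine computational obstacle remains here; the corollary is a bookkeeping assembly. The one point I would be careful to justify, rather than assert, is \emph{completeness}: that Table~\ref{tab.shhconvex} exhausts the limit periodic sets of the convex graphic joining $P_3$ and $P_4$, and that within each family the boundary members ($Sxhhia$, $Sxhhic$) and the intermediate members ($Sxhhib$) are all covered by the cited statements. Granting this completeness together with the DRR reduction, the conclusion follows at once.
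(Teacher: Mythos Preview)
Your proposal is correct and follows essentially the same approach as the paper's own proof: invoke \cite{ZR} for all limit periodic sets except $Sxhh1$ and $Sxhh5$, then close those two using Theorems~\ref{thm:sxhh1} and~\ref{thm:sxhh5}. The paper's proof is simply a terser version of your assembly argument, without spelling out the compactness reduction or the completeness caveat.
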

\begin{proof} All limit periodic sets except Sxhh1 and SXhh5 were proved in \cite{ZR}
to have finite cyclicity for any $a_0$ negative. And we have proved the finite cyclicity of SXhh1 and Sxhh5 in Theorems~\ref{thm:sxhh1} and \ref{thm:sxhh5}. \end{proof}

\section{Applications to quadratic systems}

\subsection{ Quadratic systems with a nilpotent singular point at infinity}

\begin{proposition}\label{thm.infty}
A quadratic system with a triple singular point of
saddle or elliptic type at infinity and a finite singular point
of focus or center type can be brought to the form
\begin{equation}
\left\{\begin{array}{ll}
\dot x&=\delta x-y+Bx^2\\
\dot y&=x+\gamma y+xy.
\end{array}\right.
\label{inf}
\end{equation}
The value of ``$a$"  in the corresponding normal form \eqref{normal_form_family} is $a=1-B$. Moreover \begin{enumerate}
\item When $B>1$, the singular point is a nilpotent saddle.
\item For $B\neq0,\frac12$, the system has an invariant parabola
\begin{equation}
y=(B-\frac{1}{2})x^2+(2-\frac{1}{B}) \delta x -
          \frac{\ B+(1-2B)\delta^2\ }{2B^2}
\label{inf.invariant}
\end{equation}
if  \begin{equation}\label{cond_parabola} \gamma B-(1-2B)\delta=0.\end{equation}
\item The nilpotent saddle point is of codimension $4$ when $B=\frac32$ (corresponding to $a=-\frac12$).
\item The integrability condition is
$\gamma=\delta=0$.
\end{enumerate}
\end{proposition}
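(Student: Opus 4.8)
The plan is to establish the four assertions in order: the first is a normal-form reduction, and the remaining three are computations carried out on \eqref{inf}. For the reduction, I would start from an arbitrary quadratic field possessing a finite antisaddle (focus or center) together with a triple point at infinity. First I would translate the finite singular point to the origin; since its eigenvalues are complex, a linear change of coordinates composed with a time rescaling brings the linear part to $\begin{pmatrix}\delta&-1\\1&\gamma\end{pmatrix}$. I would then use the rotational freedom to put the triple point at infinity in the direction of the $y$-axis, and spend the remaining shear and scaling freedom to annihilate the superfluous quadratic coefficients, arriving at the quadratic parts $Bx^2$ in $\dot x$ and $xy$ in $\dot y$. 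The surviving parameter count $(\delta,\gamma,B)$ is consistent with the dimension of the affine-plus-time group modulo the constraints, and a reduction of this kind is classical in the quadratic-systems literature; the point to verify carefully is that the triple-nilpotency condition at infinity is exactly what permits the remaining quadratic terms to be removed.

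To obtain $a=1-B$ I would work in the Poincar\'e chart $x=u/z,\ y=1/z$ covering the $y$-direction at infinity. After multiplying the field by $z$ to desingularize the line at infinity $\{z=0\}$, the point $(u,z)=(0,0)$ becomes a singularity with nilpotent linear part $\begin{pmatrix}0&-1\\0&0\end{pmatrix}$, and $\{z=0\}$ is precisely the invariant line demanded by the normal form \eqref{normal_form_ZR}. Setting $X=u$ and $Y=-z$, the local field reads $\dot X=Y+(B-1)X^2+\cdots$ and $\dot Y=Y(-X+\cdots)$. A time reversal (which changes the $X^2$-coefficient to $1-B$) composed with the reflection $Y\mapsto-Y$ needed to restore the $+y$ sign then puts the field in the form \eqref{normal_form_ZR} with $a=1-B$. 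Restricting $\dot Y$ to the curve $Y=-aX^2$ gives $\dot Y\sim-aX^3$, confirming that for $B\neq1$ the point is a genuine nilpotent of multiplicity $3$; the saddle condition $a<0$ holds precisely when $B>1$, which is assertion (1), and the cross-check $a<0\Leftrightarrow B>1$ is what pins down the sign.

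Assertions (2)--(4) are then direct computations on \eqref{inf}. For the invariant parabola I would substitute $F=y-(c_2x^2+c_1x+c_0)$ and require $\dot F=\dot y-(2c_2x+c_1)\dot x$ to vanish on $\{F=0\}$, equating the coefficient of each power of $x$ to zero. The cubic coefficient gives $c_2\bigl(1-2(B-c_2)\bigr)=0$, hence $c_2=B-\tfrac12$; the quadratic and linear coefficients determine $c_1$ and $c_0$ as in \eqref{inf.invariant} (whence the denominators $B$ and $2B^2$ and the restriction $B\neq0,\tfrac12$), and the last equation is exactly the compatibility condition \eqref{cond_parabola}. For (3), the equivalence $B=\tfrac32\Leftrightarrow a=-\tfrac12$ is immediate from $a=1-B$, and the jump to codimension $4$ follows by matching \eqref{normal_form_DRS} to the DRS normal form, in which $a=-\tfrac12$ corresponds to $b=0$, the extra degeneracy already exploited through the Hamiltonian/reversible structure in Theorem~\ref{thm:sxhh1}. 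For (4), I would prove integrability when $\gamma=\delta=0$ by exhibiting the reversibility $(x,y,t)\mapsto(-x,y,-t)$ of \eqref{inf}, whose fixed axis $\{x=0\}$ together with the $\pm i$ linear part forces a center at the origin; conversely I would compute the Lyapunov quantities, the first proportional to the trace $\delta+\gamma$ and the next forcing $\delta=0$, so that a center can occur only when $\gamma=\delta=0$.

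The main obstacle is the reduction in Part (1): making rigorous that, after fixing the finite singularity and the direction of the triple point, the affine-plus-time group genuinely removes every remaining quadratic term, and that the triple-nilpotency hypothesis is exactly the constraint that makes this possible. A secondary delicate point is the orientation bookkeeping in the chart computation that fixes the sign in $a=1-B$; here the consistency with the saddle/elliptic dichotomy ($B>1$ versus $B<1$) is what certifies that the sign has been tracked correctly.
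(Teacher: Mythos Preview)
Your approach is correct and close in spirit to the paper's, with two differences worth noting. First, for the reduction the paper exploits a fact you do not mention: because the triple-nilpotent condition forces $\delta_{20}\neq\gamma_{11}$, the cubic $yP_2-xQ_2$ has only a double root in the $y$-direction and hence a \emph{second} singular point at infinity; placing that second point in the $x$-direction immediately kills the $y^2$ term in $\dot x$ and the $x^2$ term in $\dot y$, after which nilpotency ($\delta_{11}=\gamma_{02}=0$) and one scaling give \eqref{inf}. This resolves cleanly the obstacle you flag at the end and replaces your vaguer ``shear and scaling'' step. Second, to read off the codimension the paper brings the localized system to the DRS form \eqref{normal_form_DRS} rather than the ZR form \eqref{normal_form_ZR}, obtaining $b=3-2B$ and $\eta=-\gamma(B-1)^2(5B^2-4B+11)$; the computation of $\eta$ verifies genericity of the $3$-jet whenever $\gamma\neq0$, information your route via \eqref{normal_form_ZR} does not directly furnish. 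Your derivation of $a=1-B$ through the chart $v=x/y,\ z=1/y$ with time reversal and $Y\mapsto-Y$ is correct and in fact more direct for the claim as stated, and your treatments of parts (2) and (4) (cofactor matching for the parabola; reversibility plus focal values for the center) fill in details the paper leaves implicit.
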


\begin{proof}
We can suppose that the nilpotent singular point at infinity is
located on the y-axis,  the other singular point at infinity on the x-axis and the focus or center at the origin.
Then the system can be brought to the form
\begin{equation}
\left\{\begin{array}{ll}
\dot x&=\delta_{10} x +\delta_{01} y   +\delta_{20} x^2   +\delta_{11}xy\\
\dot y&=\gamma_{10} x  +\gamma_{01}  y   +\gamma_{11}  xy    +\gamma_{02} y^2.
\end{array}\right.
\label{inf.1}
\end{equation}
For the finite singular point to be a focus or center, we should have
$\delta_{10}\gamma_{01}-\delta_{01}\gamma_{10}>0$.

Localizing the system \eqref{inf.1} at the singular point at infinity
on y-axis by
$v=\frac{x}{y}, \ \ z=\frac{1}{y}$, we have
\begin{equation}
\left\{\begin{array}{ll}
\dot v&=(\delta_{11}-\gamma_{02})v+\delta_{01} z+(\delta_{20}-\gamma_{11})v^2
        +(\delta_{10}-\gamma_{01})vz-\gamma_{10}v^2z\\
\dot z&=z(-\gamma_{02}-\gamma_{01} z -\gamma_{11}v-\gamma_{10} vz)
\end{array}\right.
\label{inf.2}
\end{equation}
The singular point $(0,0)$ of system \eqref{inf.2} is nilpotent,
if $\delta_{11}=\gamma_{02}=0$. It is triple if
$\gamma_{11}(\delta_{20}-\gamma_{11})\neq 0$. By a rescaling and still using the original
coordinates $(x,y)$, we obtain the system \eqref{inf}.

By a
transformation tangent to $(v,z)\mapsto(-V,z)$ and a time rescaling, we can bring system \eqref{inf.2} into the
$C^\infty$-equivalent form
\begin{equation}
\begin{cases}
\dot V=Z\\
\dot Z=(B-1)V^3- \gamma(B-1)^2 V^4+ O(V^5)\\
\qquad + Z\Big[
        (3-2B)V-\gamma(B-1)(B^2-2B+4) V^2+O(V^3)\Big]+Z^2O(|(V,Z)|^3).
\end{cases}
\label{inf.4}
\end{equation}
Then $\eta= -\gamma(B-1)^2(5B^2-4B+11)$ in
                  \eqref{normal_form_DRS} does not vanish when $\gamma\neq0$ and  $B>1$.
Also $b=3-2B$ vanishes for $B=\frac32$.              \end{proof}

\subsection{Finite cyclicity of graphics with a nilpotent point of saddle-type inside quadratic systems}

\begin{figure}[!h]
\begin{center}
\includegraphics[width=4.5cm]{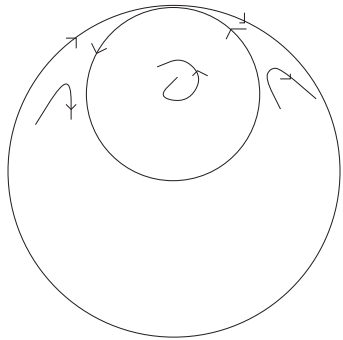}
\caption{The graphic $(I_{12}^{1})$.}\label{I_12_1}
\end{center}
\end{figure}

\begin{theorem}\label{thm:I_12^1} The graphic $(I_{12}^{1})$ (Figure~\ref{I_12_1}) has finite cyclicity inside quadratic systems. \end{theorem}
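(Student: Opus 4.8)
The plan is to realize the graphic $(I_{12}^1)$ as a convex graphic through a nilpotent saddle of multiplicity $3$ with $a_0=-\frac12$, so that Corollary~\ref{thm:saddle} applies. By Proposition~\ref{thm.infty}, the quadratic family \eqref{inf} has a triple nilpotent point of saddle type at infinity with $a=1-B$, and the value $a_0=-\frac12$ corresponds precisely to $B=\frac32$, the codimension-$4$ case in which the first-return-map hypothesis of the DRS/ZR theory cannot be avoided. First I would identify which finite singular point $(I_{12}^1)$ surrounds and confirm that it is of focus or center type, so that the normalization leading to \eqref{inf} is legitimate; I would then verify from Figure~\ref{I_12_1} that the graphic passes through the points $P_3$ and $P_4$ of the blow-up and is of the convex hh-type treated in Section~3, placing it among the limit periodic sets $Sxhh1,\dots,Sxhh10$.

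Next I would check the two genericity hypotheses demanded by Corollary~\ref{thm:saddle}. The first is the existence of a \emph{fixed connection} along the line joining $P_1$ and $P_2$ on the blow-up sphere: for $(I_{12}^1)$ this connection is provided by the invariant parabola of Proposition~\ref{thm.infty}, which exists for $B\neq0,\frac12$ under the condition \eqref{cond_parabola}, namely $\gamma B-(1-2B)\delta=0$. I would impose this relation (thereby fixing one connection, i.e.\ $\mu_1=0$ in \eqref{normal_form_family}) and argue that the parabola indeed realizes the $P_1$--$P_2$ side of the graphic. The second hypothesis is that the derivative of the first return map $\gamma^\ast=P'(0)\neq1$, equivalently that the integral of the divergence of the vector field along the invariant parabola is nonzero; as flagged in item~(2) of the introduction, this is the genericity condition that must be checked explicitly for $(I_{12}^1)$.

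The main obstacle, and the computational heart of the proof, is this divergence computation. The plan is to parametrize the invariant parabola \eqref{inf.invariant} by $x$, express the divergence $\operatorname{div}X=\partial_x\dot x+\partial_y\dot y=(\delta+2Bx)+(\gamma+x)$ of \eqref{inf}, restrict it to the parabola, and integrate with respect to the time parametrization along the connection between the relevant saddle sectors. Setting $B=\frac32$ and using the parabola condition \eqref{cond_parabola} to eliminate $\gamma$ in terms of $\delta$, I would show that the resulting integral is a nonzero multiple of a nonvanishing quantity (generically in $\delta$), so that $\gamma^\ast\neq1$. The delicate point is that the integral is improper at the hyperbolic or nilpotent endpoints of the connection, so I would need to track the convergence carefully, extracting the finite principal part whose nonvanishing gives $\gamma^\ast\neq1$; this is exactly the computation that, as remarked in the introduction, simultaneously yields the finite cyclicity of $(I_{9b}^2)$.

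Once both hypotheses are verified, the conclusion is immediate: all limit periodic sets except those in $Sxhh1$ and $Sxhh5$ have finite cyclicity by \cite{ZR} for any negative $a_0$, while Theorems~\ref{thm:sxhh1} and~\ref{thm:sxhh5} handle $Sxhh1$ and $Sxhh5$ at $a_0=-\frac12$ under the fixed-connection hypothesis, so Corollary~\ref{thm:saddle} gives finite cyclicity of the full graphic $(I_{12}^1)$ inside quadratic systems.
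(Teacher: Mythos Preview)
Your approach is the same as the paper's: reduce to Corollary~\ref{thm:saddle} and verify its two hypotheses, the heart being the divergence integral along the invariant parabola. The paper carries out exactly this integral (for general $B>1$, not just $B=\tfrac32$), obtaining
\[
P'(0)=\exp\!\bigl(4\pi\,\delta\, B^{1/2}(1-B)(1-B\delta^2)^{-1/2}\bigr)\neq1
\]
whenever $\delta\neq0$; the symmetric limit $\int_{-x_0}^{x_0}$ makes the logarithmically divergent part cancel, leaving only the finite $\arctan$ contribution, which handles your anticipated ``improper endpoint'' issue.

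There is, however, a genuine confusion in your identification of the two connections. The invariant parabola is \emph{not} the $P_1$--$P_2$ line on the blow-up sphere; it is the graphic $\Gamma$ itself, the regular hh-connection entering the blown-up nilpotent point through $P_3$ and $P_4$ (in the local normal form \eqref{normal_form_ZR} the parabola is tangent to $\ov y=\frac{1-2a}{2}$, not to $\ov y=0$). The fixed $P_1$--$P_2$ connection, i.e.\ $\mu_1\equiv0$, comes instead from the invariance of the line at infinity, which is automatic for any polynomial system extended to the Poincar\'e sphere (compare the proof of Theorem~\ref{thm:saddle_sn}: ``$\mu_1=0$ in the unfolding, because the equator is invariant''). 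Condition~\eqref{cond_parabola} does not impose $\mu_1=0$; it is what makes the hh-graphic $(I_{12}^1)$ exist in the first place by creating the parabolic connection. This mix-up does not break your proof, since both hypotheses of Corollary~\ref{thm:saddle} are satisfied anyway, but the geometry should be set straight.
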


\begin{proof} The graphic $(I_{12}^{1})$ is an hh-type graphic with a nilpotent saddle of multiplicity 3
at infinity and an invariant parabola as shown in Fig \ref{I_12_1}.

By Theorem~\ref{thm:saddle}, to prove the finite cyclicity of $(I_{12}^{1})$, we only need to
check that the first return map $P$ of the system \eqref{inf} along the invariant parabola \eqref{inf.invariant} under condition \eqref{cond_parabola}
satisfies $\gamma^{\ast}=P'(0)\neq1$ when $\gamma\neq0$. Along the invariant parabola \eqref{inf.invariant}, we have
\begin{flalign*}
\begin{array}{ll}
P'(0)
&=\displaystyle{\exp\left(\int^\infty_{-\infty}\ div\; dt\right)}\\
&=\displaystyle{\lim_{x_0\to \infty}
     \exp\left(\int^{x_0}_{-x_0}
     \frac{(1+2B)x+\frac{(1-B)\delta}{B}}{\quad \frac12x^2+\frac{(1-B)\delta}{B} x+ \frac{(1-2B)\delta^2+B}{2B^2}\quad} dx\right)}\\
&=\displaystyle{\lim_{x_0\to \infty}\left[
  \left(\frac{\ -B^2x_0^2+2\delta B(B-1)x_0+\delta^2(2B-1)-B\ }
             {\ -B^2x_0^2-2\delta B(B-1)x_0+\delta^2(2B-1)-B\ }
  \right)^{1+2B}\right.}\\
  &\qquad\displaystyle{\left.
  \exp\left(4\delta B^{1/2}(1-B)(1-B\delta^2)^{-1/2}
         \arctan\frac{\ -Bx+(B-1)\delta\ }{\sqrt{B(1-B\delta^2)}} \right)\Big|^{x_0}_{-x_0} \right] }\\
&=\displaystyle{ \exp\left(4\pi \delta B^{1/2}(1-B)(1-B\delta^2)^{-1/2}\right)  }\neq 1,
\end{array}
\end{flalign*}
when $\delta\neq0$ and $B\neq1$. (Note that $1-B\delta^2>0$ is the condition that the system has no singular point on the invariant parabola.)
\end{proof}

\begin{figure}[!h]
\begin{center}
\includegraphics[width=4.5cm]{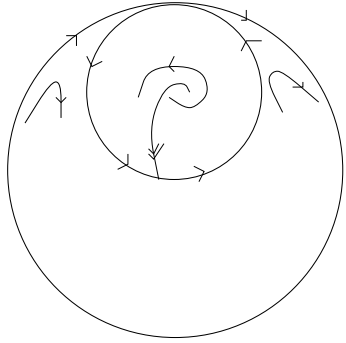}
\caption{The graphic $(I_{13}^{1})$.}\label{I_13_1}
\end{center}
\end{figure}

\begin{theorem}\label{thm:saddle_sn} The graphic $(I_{13}^1)$ (see Figure~\ref{I_13_1}) has finite cyclicity inside quadratic systems.
\end{theorem}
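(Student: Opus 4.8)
The plan is to treat $(I_{13}^1)$ in close parallel with $(I_{12}^1)$, the single new ingredient being a saddle-node through whose central direction the connection passes. As in the proof of Theorem~\ref{thm:I_12^1}, the graphic $(I_{13}^1)$ is an hh-type graphic through a nilpotent point of saddle type of multiplicity~$3$ at infinity and carries an invariant parabola; in the normal form \eqref{inf} the delicate case is again $B=\frac32$, that is $a_0=-\frac12$ (item~3 of Proposition~\ref{thm.infty}). First I would record that the invariant line at infinity, which is always present for quadratic systems, furnishes the fixed connection demanded by the abstract results, so that the only genericity left to verify is that the return map along the invariant connection has derivative different from~$1$.

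Before the application I would establish the abstract statement announced in item~(3) of the introduction: a convex graphic through such a nilpotent saddle with $a_0=-\frac12$, together with a saddle-node with central transition and a fixed connection, has finite cyclicity under the hypothesis $\gamma^\ast\neq1$. Its proof follows the displacement-map analysis of Theorem~\ref{thm:sxhh1}. After the blow-up of the family at the nilpotent point, $P_3$ and $P_4$ appear on $r=\rho=0$ with $\sigma_0=4$ and carry the Dulac maps of Theorem~\ref{thm_Dulac_type_1}, while the saddle-node lies on the regular part of the connection and so enters only through the transition maps $R$ and $T$ of \eqref{eq:L}. By Theorem~\ref{Dulac_saddle-node}, in the central-transition regime $\eta>0$ its Dulac map is linear, $D_A(y)=\eps(A)y$ with $\eps(A)>0$ exponentially small in $\sqrt{\eta}$; it can therefore be absorbed into the regular $C^k$ transitions, so that $L_\nu$ retains the shape of \eqref{equation_L}--\eqref{L_tilde}, the relevant transition derivative now carrying this extra smooth factor, and the derivation-division argument goes through provided the nonvanishing $\tilde n_2(A_0,0)\neq0$ persists.

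Next I would dispose of the remaining saddle-node regimes: for $\eta=0$ the saddle-node is genuinely present and one invokes the flat stable-center Dulac map of Theorem~\ref{Dulac_saddle-node}, while for $\eta<0$ it splits into a hyperbolic saddle and a node whose extra passage is controlled by the Dulac map of Theorem~\ref{Dulac_saddle}; in each case the displacement map keeps the structure required for the Rolle and derivation-division estimate. With the abstract statement in hand, the application to $(I_{13}^1)$ reduces, exactly as for $(I_{12}^1)$, to the genericity computation: one evaluates $P'(0)=\exp\!\big(\int\mathrm{div}\,dt\big)$ along the invariant connection of \eqref{inf} under the parabola condition \eqref{cond_parabola}, and the resulting integral of the divergence is nonzero away from the integrability locus $\gamma=\delta=0$ of item~4 of Proposition~\ref{thm.infty}.

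The hard part will be the uniform control of the three saddle-node regimes simultaneously with the two Dulac maps of the blown-up nilpotent point. Because $\eps(A)$ is exponentially small in $\sqrt{\eta}$ while the nilpotent maps contribute powers $(\nu/\nu_0)^{\bar\sigma}$ together with compensator terms $\omega$, the subtle point is to show that these mutually incommensurable small quantities cannot conspire to create additional zeroes of the displacement map; equivalently, that property~$(I)$ and property~$(J)$ of the various remainders survive composition with the saddle-node transition, so that the final derivation-division step terminates with a nonvanishing expression.
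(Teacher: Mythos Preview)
Your plan has two genuine gaps. First, you treat the abstract result as if the displacement-map analysis of Theorem~\ref{thm:sxhh1} covers all limit periodic sets, with the saddle-node merely contributing an exponentially small factor to $R_\nu$ or $S_\nu$. But the blow-up of the family produces the full menu Sxhh1--Sxhh8 of limit periodic sets on $D_{\ov{\mu}}$ (the restriction $\mu_1=0$ rules out only Sxhh9--Sxhh10), and several of these carry a \emph{second} saddle-node, namely one arising on the blow-up sphere from the family rescaling \eqref{family_rescaling}. For Sxhh3 and Sxhh7 in particular, the derivation-division argument does not close: one must replace the Dulac map near that saddle-node by the Pfaff equation $F_A(w)\,dz - z\,dw = 0$ it satisfies and apply the Khovanskii contact-point method to bound the number of solutions of the resulting two-equation system. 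This step is absent from your proposal and is not a minor variant of the Rolle argument you sketch.

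Second, the genericity check you propose at the end is both unnecessary and misconceived. Along the invariant parabola of $(I_{13}^1)$ the connection passes through an actual saddle-node, so the integral $\int\mathrm{div}\,dt$ diverges to $-\infty$ and $P'(0)=0$ trivially; there is nothing to compute. The paper exploits exactly this: the exponentially small $\eps(A)$ from the central transition already forces the return-map derivative below~$1$ for all upper and intermediate limit periodic sets, disposing of them with cyclicity~$1$ without any divergence computation, and the substantive work lies entirely in the case-by-case treatment of the lower boundary limit periodic sets. Your discussion of the regimes $\eta=0$ and $\eta<0$ is also off target: limit cycles close to the graphic require the central passage to exist, hence $\eta>0$, and the flat stable-center transition of Theorem~\ref{Dulac_saddle-node} is invoked not for the original saddle-node but for the saddle-nodes appearing on the blow-up sphere (e.g.\ in Sxhh2c and Sxhh8c).
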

\begin{proof} This graphic is a convex graphic through a nilpotent saddle of multiplicity 3, and with a central transition through a saddle-node. In quadratic systems, such a graphic occurs when the nilpotent point is at infinity. Then $\mu_1=0$ in the unfolding, because the equator is invariant. This limits the number and complexity of the limit periodic sets to be considered.
Without loss of generality, we can suppose that the saddle-node is attracting. The proof is an easy adjustement of that of Corollary~\ref{thm:saddle}. Indeed, by Theorem~\ref{Dulac_saddle-node},  the central transition through a saddle-node in normal form coordinates is linear with exponentially small coefficient $\eps(A)$ in the parameter unfolding the saddle-node.

Because of the restriction to quadratic systems (hence $\mu_1=0$) we need only consider the limit periodic sets occurring in Sxhh1-Sxhh8 of Table~\ref{tab.shhconvex}, and the connection along the invariant line is always fixed. The  upper and intermediate graphics all have cyclicity one: indeed,  the first return map has a derivative much smaller than one because of the passage near the saddle-node by Theorem~\ref{Dulac_saddle-node}.

Hence, we need only consider the lower limit periodic sets. The cyclicity is one for Sxhh2c. Indeed, the global Poincar\'e return map has a derivative less than $1$, since the Dulac map near the attracting saddle-node on the blow-up sphere is flat (Theorem~\ref{Dulac_saddle-node}, case 2), and hence has a very small derivative. The same is true for Sxhh8c because the transition is fixed between the saddle and the saddle-node on the blow-up sphere. Indeed, since the stable-center transition near the saddle-node is flat, then the composition of three maps on the blow-up sphere (the passage near the saddle (given in Theorem~\ref{Dulac_saddle}) with  the regular transition between the saddle and the saddle-node and the stable-center transition near the saddle-node is flat.

We group the rest of the limit periodic sets  into classes and give sketchy arguments, since these are quite classical.

\medskip \noindent{\bf Sxhh1, Sxhh4, Sxhh5 and Sxhh6.} The argument is similar to the finite cyclicity of a graphic with a saddle-node with center transition and a hyperbolic saddle. The cyclicity is $1$ if the hyperbolicity ratio $\tau$ at the saddle for Sxhh1 (resp. the product $\tau$ of the hyperbolicity ratios at the two saddle points for Sxhh4 and Sxhh6)  is greater than one since the Poincar\'e return map has a derivative less than $1$.

When $\tau\leq1$, we consider the displacement map $L_\nu: \Sigma_4 \longrightarrow \Sigma$ (see Figure~\ref{transition_hh_SN}(a)), defined by $L_\nu=R_{3,\nu}\circ D_{3,\nu}\circ T_\nu\circ D_{4,\nu}^{-1}- D_\nu^{-1}\circ R_{4,\nu}^{-1}$. It has been shown in \cite{GR} that it is possible to choose normalizing coordinates on $\Pi$, such that $R_{4,\nu}$ is an affine map. Hence,  $D_\nu^{-1}\circ R_{4,\nu}^{-1}$ is an affine map, whose second derivative is identically zero. If $\tau<1$, then we directly see that
$L_\nu''(\tilde{y}_4)\neq0$, since $T_\nu(\tilde{y}_4) = \eps_0+C\tilde{y}_4^\tau+ O(\tilde{y}_4)$, with $C\neq0$.
If $\tau=1$, which occurs for $\mu_3=0$, then we can use exactly the same sections and arguments as in Theorem~\ref{thm:sxhh1} since the
family rescaling is integrable in this case.

\medskip \noindent{\bf Sxhh3.} The argument is similar to the finite cyclicity of a graphic with two saddle-nodes, one with center transition (the one on the blow-up sphere) and one with center-unstable transition. It involves using the Khovanskii method.
\begin{figure}
\begin{center}
\subfigure[Intermediate graphic]
{\includegraphics[height=6cm]{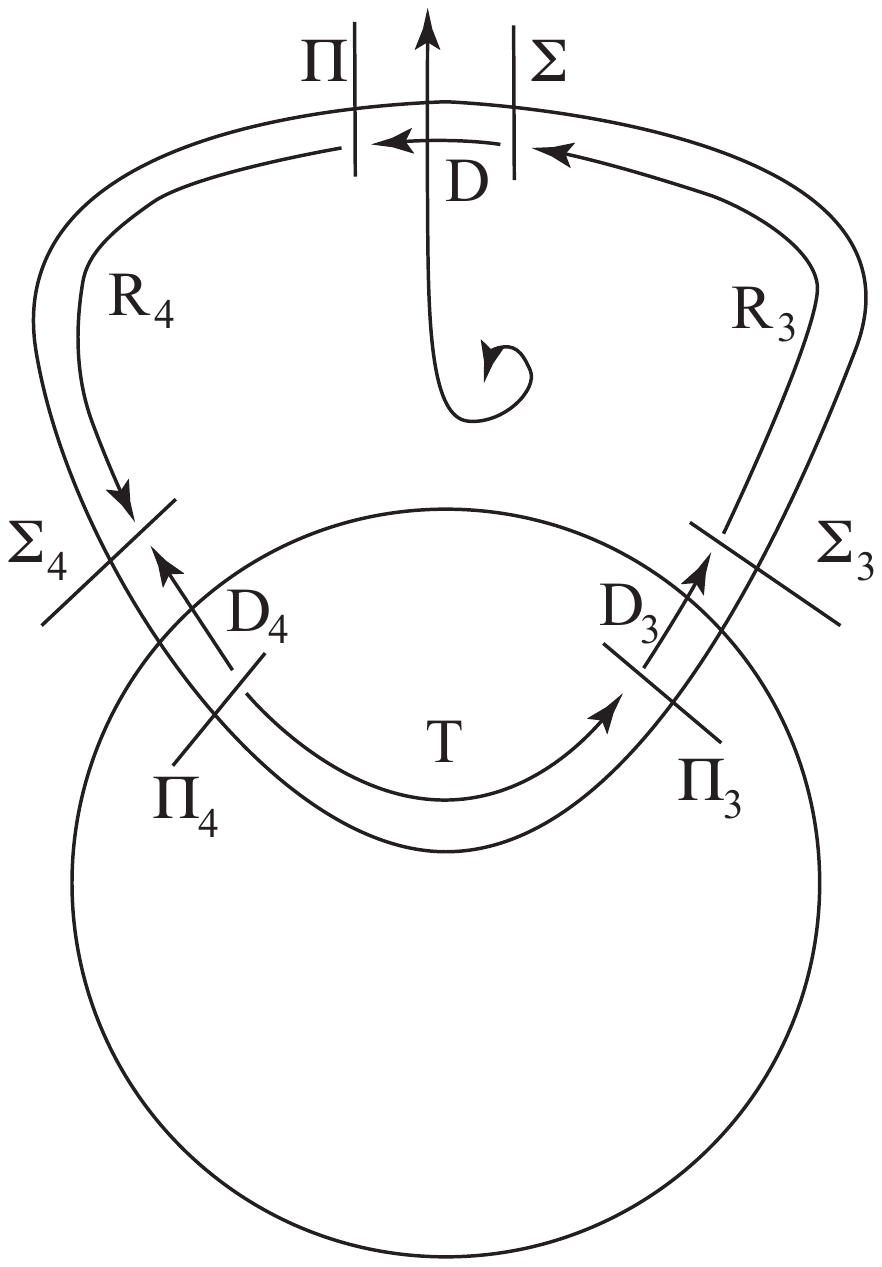}}\qquad
\subfigure[Sxhh3]
{\includegraphics[height=6cm]{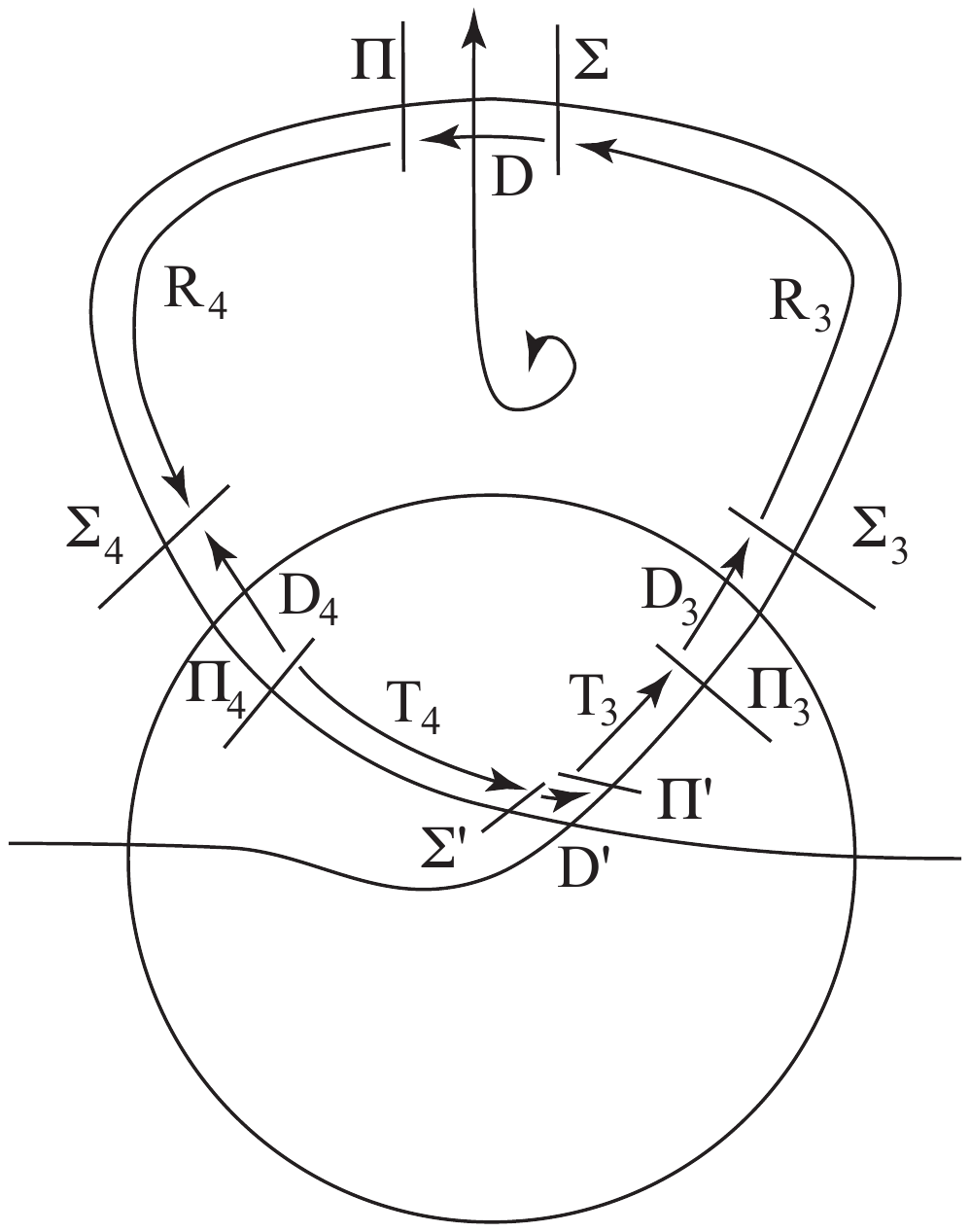}}\qquad
\subfigure[Sxhh7]
{\includegraphics[height=6cm]{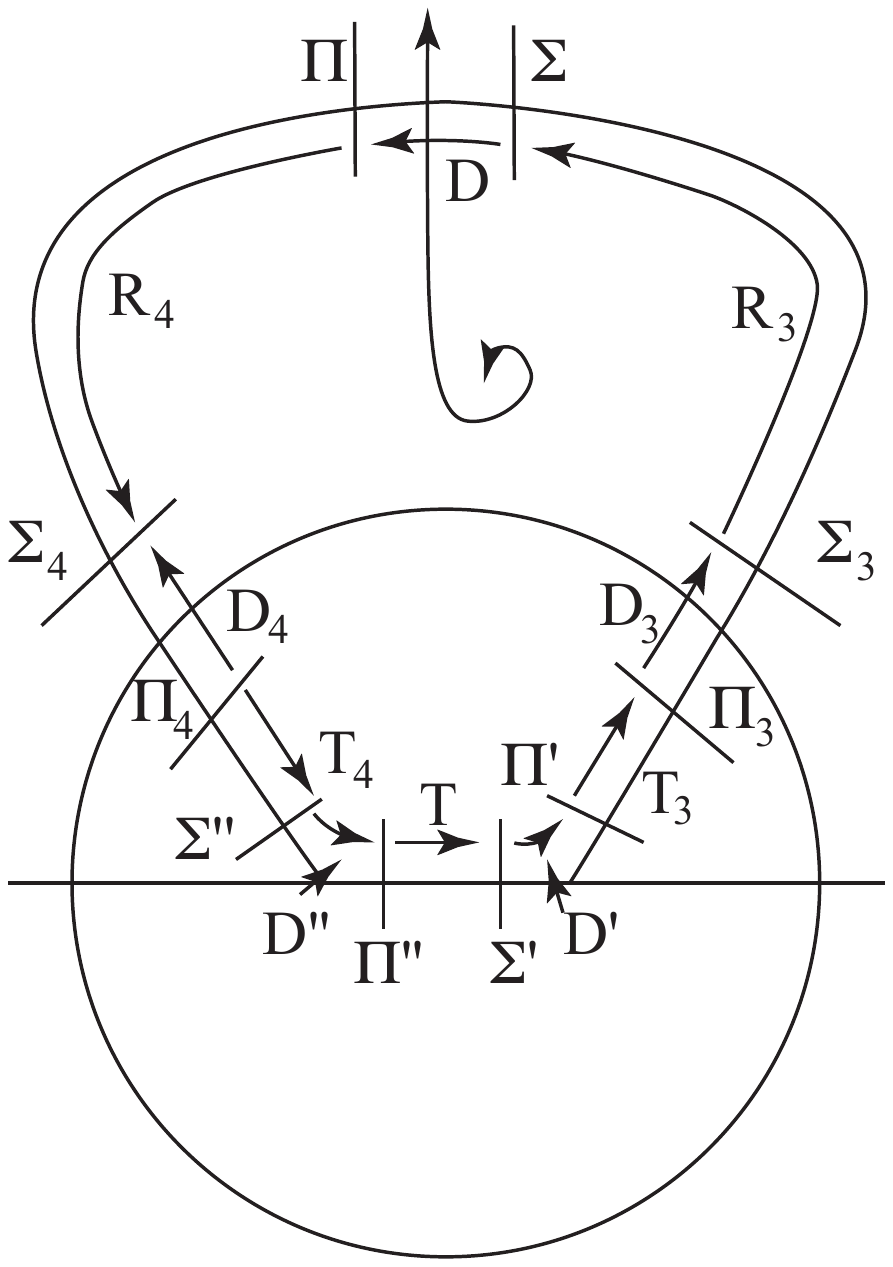}}
\caption{The sections for $(I_{13}^1)$.} \label{transition_hh_SN}\end{center}\end{figure}
Indeed, let $\Sigma'$ and $\Pi'$ be two sections in normal form coordinates at the entrance and exit of the saddle-node on the blow-up sphere (see Figure~\ref{transition_hh_SN}(b)), where $\Sigma'$ is parameterized by $z$ and $\Pi'$ by $w$. We replace  considering  the displacement map from $\Pi'$ to $\Sigma'$ by considering the equivalent system of two equations
\begin{equation}\begin{cases}
z = S_\nu(w), \\
z = D_\nu'^{-1}(w),
\end{cases}\label{Kho1}\end{equation}
where $S_\nu$ follows the flow forwards:
\begin{equation}S_\nu=T_{4,\nu}\circ D_{4,\nu}^{-1}\circ R_{4,\nu}\circ  D_\nu\circ R_{3,\nu}\circ D_{3,\nu}\circ T_{3,\nu}.\label{formula_S}\end{equation} The Taylor expansion of $S_\nu$ has the form $S_\nu(w)= \eps_0(A)+ \eps_1(A)w(1+ h(w,A))$, where $h(w,A)=O(w)$ is bounded and has property (J). Also $\eps_1(A)>0$, when the saddle-node has disappeared, a necessary condition for the existence of limit cycles.
Now, $D_\nu^{-1}$ is the Dulac map following the flow backwards near the saddle-node. The function $z=D_\nu^{-1}(w)$ is solution of the Pfaff equation $F_A(w)dz-zdw=0$, where $F_A(w)= (w^2+\eta(A))(1+C(A)w)$ and $F_A(w) \frac{\partial}{\partial w} + z\frac{\partial}{\partial z}$ is the normal form of the vector field in the neighborhood of the saddle-node. Hence, we replace the system \eqref{Kho1} by the system
\begin{equation}\begin{cases}
z = S_\nu(w), \\
\Omega=F_A(w)dz-zdw=0,
\end{cases}\label{Kho2}\end{equation}
Between two solutions of the system~\eqref{Kho2}, there exists on $z=S_\nu(w)$ a contact point of $\Omega$ with $z=S_\nu(w)$. Hence, the number of solutions is at most one plus the number of solutions of
\begin{equation}\begin{cases}
z = S_\nu(w), \\
z-S_\nu'(w)F_A(w) =0,
\end{cases}\label{Kho3}\end{equation}
which yields the 1-dimensional equation $V_A(w)=S_\nu(w)-S_\nu'(w)F_A(w)=0$. This equation has at most one small solution. Indeed,
$$V_A'(w)= \eps_1(A)\left[1+O(w)+O(\eta)\right] \neq0,$$
for small $w$ and $A$ sufficiently close to $A_0$.

\medskip \noindent{\bf Sxhh7.} We only need to adapt the argument done for Sxhh3.
We consider the sections in Figure~\ref{transition_hh_SN}(c). Since the connection between the saddle and the saddle-node is fixed on the blow-up sphere, this suggests taking for the displacement map, the map from  $\Pi'$ to $\Sigma''$, parametrized respectively by $z$ and $w$. As before, we consider the equivalent system of two equations
\begin{equation}\begin{cases}
z = S_\nu(w), \\
z = U_\nu(w),
\end{cases}\label{Kho4}\end{equation}
where $S_\nu$ is given by \eqref{formula_S} and $U_\nu= D_\nu''^{-1}\circ T_\nu^{-1}\circ D_\nu'^{-1}$.
Let $\tau(A)$ be the hyperbolicity ratio at the saddle point.
We have  \begin{equation}v=T_\nu\circ D_\nu''(z) = c(A) z^{\tau(A)}( 1+ \phi_1(z,A)),\label{equation_v_z}\end{equation} where $c(A)>0$ and $\phi_1$ has property (I).
As before, the function $v=D_\nu'^{-1}(w)$ is solution of the Pfaff equation $F_A(w)dv-vdw=0$, where $F_A(w)= (w^2+\eta)(1+C(A)w)$ and $F_A(w) \frac{\partial}{\partial w} + v\frac{\partial}{\partial v}$ is the normal form of the vector field in the neighborhood of the saddle-node.
Then, replacing \eqref{equation_v_z} in the Pfaff equation yields $$\tau(A)F_A(w)dz-z(1+\phi_2(z,A))dw=0,$$
where $\phi_2(z,A)$ has property (I).

The rest of the proof is as for Sxhh3.
\end{proof}

\end{document}